\documentclass[amsmath,amssymb,amsthm]{revtex4-1}

\usepackage{graphicx,amsthm}

\newcommand{\reff}[1]{{\rm (\ref{#1})}}

\newcommand{\R}{\mathbb{R}} 
 
\newcommand{\bX}{\mathbf X} 
\newcommand{\bx}{\mathbf x}

\newcommand{\bn}{\mathbf n} 
 
\newcommand{\ve}{\varepsilon}

\newtheorem{theorem}{Theorem}[section]

\numberwithin{equation}{section}
\numberwithin{table}{section}

\begin{document}
\preprint{APS/123-QED}

\title{A New Phase-Field Approach to Variational Implicit Solvation of Charged Molecules with the Coulomb-Field Approximation}

\author{Yanxiang Zhao}
\email{yxzhao@email.gwu.edu}
\thanks{}
\affiliation{ 
Department of Mathematics, the George Washington University, 801 22nd St. NW, Phillips 739, Washington, DC, 20052}

\author{Yanping Ma}
\email{yma@lmu.edu}
\thanks{}
\affiliation{ 
Department of Mathematics, Loyola Marymount University, 1 LMU drive, Los Angeles, CA 90045, USA}

\author{Hui Sun}
\email{hui.sun@csulb.edu}
\thanks{}
\affiliation{ 
Department of Mathematics and Statistics, California State University, Long Beach, CA 90840-1001, USA}

\author{Bo Li}
\email{bli@math.ucsd.edu}
\thanks{}
\affiliation{Department of Mathematics and Quantitative Biology Graduate Program, 
University of California, San Diego, 9500 Gilman Drive, Mail code: 0112, La Jolla, CA 92093-0112, USA}

\author{Qiang Du}
\email{qd2125@columbia.edu}
\affiliation{Department of Applied Physics and Applied Mathematics, Columbia University, NY 10027, USA}


\begin{abstract}
{\bf Abstract.} 
We construct a new phase-field model for the solvation of charged molecules with a variational implicit solvent. Our phase-field free-energy functional includes the surface energy, solute-solvent van der Waals dispersion energy, and electrostatic interaction energy that is described by the Coulomb-field approximation, all coupled together self-consistently through a phase field. By introducing a new phase-field term in the description of the solute-solvent van der Waals and electrostatic interactions, we can 
keep the phase-field values closer to those describing the solute and solvent regions, respectively, 
making it more accurate in the free-energy estimate.  We first prove that our phase-field functionals $\Gamma$-converge
to the corresponding sharp-interface limit.  We then develop and implement an efficient and stable 
numerical method to solve the resulting gradient-flow equation to obtain equilibrium 
conformations and their associated free energies of the underlying charged molecular system. 
Our numerical method combines a linear splitting scheme, spectral discretization, and exponential 
time differencing Runge-Kutta approximations.  Applications to the solvation of single ions and a two-plate system
demonstrate that our new phase-field implementation improves the previous ones by achieving the localization of the system forces near the solute-solvent interface and maintaining more robustly the desirable hyperbolic tangent profile for even larger interfacial width. This work provides a scheme to resolve the possible unphysical feature of negative values in 
the phase-field function found in the previous phase-field modeling (cf.\ H.\ Sun, {\it et al.} J. Chem. Phys., 2015)  
of charged molecules with the Poisson--Boltzmann equation for the electrostatic interaction. 
\end{abstract}

\maketitle


\section{Introduction}
\label{s:introduction}

We consider the solvation of charged molecules in an aqueous solvent (i.e., water or salted water). 
The entire region $\Omega$ of an  underlying solvation system consists 
of a solute (i.e., the charged molecule) region
$\Omega_{\rm m}$ ($\rm m$ stands for charged molecules), a solvent region 
$\Omega_{\rm w}$ ($\rm w$ stands for water), and a solute-solvent interface $\Gamma$ that 
separates these two regions.   cf.\ Figure~\ref{fig:SchematicSystem}. 
We assume there are $N$ solute atoms located at $\bx_1, \dots, \bx_N$ inside 
the solute region $\Omega_{\rm m}$, carrying partial charges $Q_1, \dots, Q_N$, respectively. 
This solute-solvent interface is also treated as a dielectric boundary, as the dielectric coefficient
$\ve_{\rm m}$ in the solute region is close to $1$ and that $\ve_{\rm w}$ in
the solvent region is close to $80.$

\begin{figure}[htbp]
\begin{center}
\includegraphics[width=100mm]{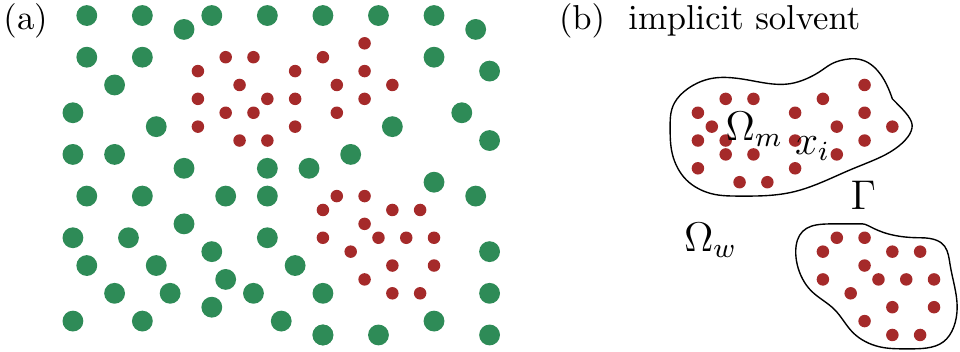}
\end{center}
\vspace{-6 mm}
\caption{\small {Schematic description of a solvation system.
(a) In a fully atomistic model, both the solute atoms (small and brown dots) 
and solvent molecules (large and green dots) are degrees 
of freedom of the system.  (b) In an implicit-solvent model, the
solvent molecules are coarse-grained and the solvent is treated as a continuum. 
The solvent region $\Omega_{\rm w}$ and the solute region $\Omega_{\rm m}$ are separated by
the solute-solvent interface (i.e., the dielectric boundary) $\Gamma.$
The solute atoms are located at $\bx_1, \dots, \bx_N$ inside $\Omega_{\rm m}$.}}
\label{fig:SchematicSystem}
\end{figure}

In a variational implicit-solvent model (VISM) \cite{DSM06a,DSM06b}
(cf.\ also \cite{WangEtal_VISMCFA_JCTC12, Zhou_VISMPB_JCTC14,SunWenZhao_JCP2015}), 
one obtains an equilibrium solute-solvent interface and a free-energy estimate by minimizing
a macroscopic solvation free-energy functional among all solute-solvent interfaces $\Gamma$. 
Such a functional includes the solute-solvent interfacial energy, 
solute-solvent van der Waals interaction energy, and the electrostatic free energy, all
determined by the interface $\Gamma.$  The electrostatic part of the free energy is often described by 
the Poisson--Boltzmann (PB) theory 
\cite{DavisMcCammon90,CDLM_JPCB08,LiChengZhang_SIAP11,Li_SIMA09,SharpHonig90,Zhou_VISMPB_JCTC14}
or the Coulumb-field approximation (CFA) \cite{BashfordCase00,WangEtal_VISMCFA_JCTC12}. 

In this work, we consider the phase-field implementation of VISM 
\cite{LiZhao_SIAP13,SunWenZhao_JCP2015,Zhao_2013,DaiLiLu_ARMA2017}. 
We use a phase field $\phi: \Omega \to \R$ to describe the solute-solvent interface
with $ \{ \phi \approx 1 \}$ and $\{ \phi \approx 0 \}$ representing the solute and solvent regions, respectively. 
The corresponding solvation free-energy functional of a phase field  $\phi: \Omega \to \R$ is given by 
\begin{align}
\label{General_phasefield}
F^{\epsilon}[\phi] =  \gamma  \int_{\Omega} 
\left[\frac{\epsilon}{2}|\nabla \phi |^2 + \frac{1}{\epsilon}W(\phi )\right]\, d\bx 
+ \rho_{\rm w} \int_{\Omega} f(\phi)  U_{\rm vdW}  \, d \bx
+ \int_{\Omega} f(\phi) U_{\rm ele}(\bx) \, d \bx. 
\end{align}
Here, $\epsilon > 0$ is a small parameter that controls the width of solute-solvent
interfacial region. The first term describes the solute-solvent interfacial energy, 
where $\gamma > 0$ is the surface tension (a given constant) and 
\[
W(\phi) = 18(\phi^2-\phi)^2. 
\]
The specific constant $18$ is chosen for convenience of analysis; cf.\ Section~\ref{s:GammaConvergence}. 

The second term  describes the solute-solvent van der Waals interaction. 
In this term, $\rho_{\rm w}$ is the bulk solvent density (a given constant) and 
\begin{align}\label{UvdW}
U_{\text{vdW}} (\bx) = \sum_{i=1}^N U_{\rm LJ}^ {(i)}(|\bx - \bx_i|),  
\end{align}
where each $U_{\rm LJ}^{(i)}$ is taken to be a Lennard-Jones potential
\begin{align*}
U^{(i)}_{\text{LJ}}(r)=4\varepsilon_i
\left[ \left(\dfrac{\sigma_i}{r}\right)^{12}-\left(\dfrac{\sigma_i}{r}\right)^6\right],
\end{align*}
with $\varepsilon_i$ and $\sigma_i$ being the corresponding interaction
energy and linear size of atomic excluded volume. The function $f (\phi)$ has the property that 
\begin{equation}\label{f01} 
f(0) = 1 \qquad \mbox{and} \qquad  f(1) = 0,
\end{equation}
indicating that the integral is taken over the solvent region. 

The last term is the electrostatic energy, where  $U_{\rm ele}$ is the electrostatic energy density and 
the integral is again taken over the solvent region.  For the PB electrostatics, one needs to 
solve a phase-field dielectric boundary PB equation to obtain the electrostatic energy density $U_{\rm ele}$ 
\cite{LiLiu_SIAP15, SunWenZhao_JCP2015, DaiLiLu_ARMA2017}. 
Here, we shall consider the CFA, which yields a good approximation of the electrostatic free energy
when the ionic effect is less significant.  The CFA  makes the computation efficient, and also provides 
a simple model for analyzing the geometry of interfacial region. 
In the CFA \cite{ChengChengLi_Nonlinearity11,WangEtal_VISMCFA_JCTC12, Zhao_2013}, 
the electrostatic energy density is given by
\begin{equation}\label{Uele}
U_{\rm ele} (\bx) = \frac{1}{32 \pi^2 \ve_0}
\left( \frac{1}{ \ve_{\rm w}} - \frac{1}{\ve_{\rm m}}\right) 
 \left| \sum_{i=1}^N \frac{Q_i (\bx-\bx_i) }{|\bx - \bx_i|^3} \right|^2, 
\end{equation}
where $\varepsilon_{\text{0}}$ is the vacuum permittivity. 

The minimization of the free-energy functional \reff{General_phasefield} can 
be achieved by solving for a steady-state solution of 
the corresponding gradient-flow equation 

\begin{align}
\label{General_gradientflow}
 \partial_t \phi = \gamma \left[ \epsilon \Delta \phi - \frac{1}{\epsilon} W'(\phi) \right] 
- f'(\phi) \left( \rho_{\text{w}}  U_{\text{vdW}} + U_{\text{ele}} \right),  
\end{align}
with a fixed and small $\epsilon > 0$, and some initial and boundary conditions for $\phi$.

The form of the function $f(\phi)$ is crucial to capturing  the interfacial structure of an underlying interface system. 
An ad hoc choice of such a function, such as 
\begin{equation}\label{f03}
f(\phi) = (\phi-1)^2,
\end{equation}
may lead to some unphysical features, such as the non-monotonicity of the phase-field 
functions from 0 to 1 and the loss of  localization of the force 
near the interface \cite{Zhao_2013, SunWenZhao_JCP2015}.  In this work, we propose a new form of this function 
\begin{align}
\label{f_term}
f(\phi) = (\phi^2-1)^2. 
\end{align}
We will demonstrate numerically that, with such a function $f$,  the energy-minimizing 
phase-field approximates better $1$ and $0$, in the two regions, respectively.  
Heuristically, with such a function, we have not only  \reff{f01}, but also that 
\begin{equation}\label{f02}
f'(0) = 0 \qquad \mbox{and} \qquad f'(1) = 0. 
\end{equation}
These will lead to a more localized ``boundary force" near the solute-solvent interface that 
involves $f'(\phi)$,  which is consistent with the force balance equation (Euler--Lagrange equation)
for a sharp interface  \cite{LiZhao_SIAP13}.  Moreover, the localization of force due to the property 
\reff{f02} allows us to use a small computational box that encloses the entire solute region and solute-solvent 
interface, thus greatly improving the computational efficiency. 
Notice that the issue of non-monotonic artificial interfacial structure does not exist, 
if one only minimizes the surface energy, i.e., the first integral in \reff{General_phasefield}. This issue arises 
from the nonlocality of the van der Waals energy and the electrostatic energy, the last two integrals in \reff{General_phasefield}. 

We shall first prove the $\Gamma$-convergence of our new, phase-field free-energy
functionals to the corresponding sharp-interface limit as $\epsilon \to 0$. 
This is similar to the proof given in \cite{LiZhao_SIAP13}, cf.\ also  \cite{DaiLiLu_ARMA2017}. 
We then design, implement, and test accurate and efficient numerical methods for 
solving the gradient-flow  equation.  Our methods couple a linear splitting scheme 
\cite{DuZhu_2005, Ju_JSC2015, Xu_SIAM2006, Yang_JCP2007, WangJuDu_JCP16}, 
spectral discretization schemes,  and exponential time differencing Runge-Kutta approximations 
\cite{Cox_JCP2002, Kassam_SIAMSC2005, JuZhangDu_CMS15, WangJuDu_JCP16}.   
We finally apply our model and numerical methods to  some charged molecules, such a single ion and 
a two-plate system, demonstrating that our proposed new model performs numerically better than the 
pervious ones by achieving the force localization near the solute-solvent interface and 
maintaining more robustly the desirable hyperbolic tangent profile for even larger interfacial width.

The variational implicit-solvent model (VISM), implemented with a robust level-set method, has successfully predicted dry and wet states and dewetting transition, charge effects, and potential of mean forces, and many other important properties of biological molecules that have been observed in experiment and in molecular dynamics simulations \cite{CDML_JCP07, ChengLiWang_JCP10, WangEtal_VISMCFA_JCTC12, Zhou_VISMPB_JCTC14, Zhou_StochLSM2016,Ricci_MVISM_2017, Che_Pocket_2015, Che_p53_JCTC2014}. The phase-field implementation of VISM provides an alternative mathematical model for the computation of molecular conformations and free energies. Moreover, it may be used to include bulk solvent fluctuations that together with the solute-solvent interface fluctuations enable an underlying system to make transition from one equilibrium conformation to another \cite{Metiu_GLModel_JCP83,KarmaRappel_PRE99}. This is particularly important in terms of hydrophobic interactions \cite{Chandler05,BerneWeeksZhou_Rev09,Luzar_PCCP2011}.

The rest of the paper is organized as follows: 
In Section~\ref{s:GammaConvergence}, we prove the $\Gamma$-convergence of our phase-field
functionals \reff{General_phasefield} to the corresponding sharp-interface limit. 
 In Section~\ref{s:NumericalMethods}, we describe our numerical methods 
for solving the gradient-flow dynamics equation of the phase-field free-energy functional. 
Finally, in Section~\ref{s:Tests}, we apply our theory and methods to the solvation of single ions and 
a two-plate system. The Appendix contains some details of our numerical methods. 


\section{$\Gamma$-Convergence}
\label{s:GammaConvergence}

In this section, we will briefly discuss the $\Gamma$-convergence of the phase-field model \reff{General_phasefield} to the corresponding sharp-interface model by following the approach similar to that  in \cite{LiZhao_SIAP13}. 
To make our results more general, we consider as in \cite{LiZhao_SIAP13} in this section the following functional of both phase field and the set of solute particles, including the solute-solute mechanical interactions: 
\begin{align}\label{General_phasefield2}
F^{\epsilon}[\bX, \phi] = E[\bX] + \gamma  \int_{\Omega} 
\left[\frac{\epsilon}{2}|\nabla \phi |^2 + \frac{1}{\epsilon}W(\phi )\right]\, d\bx 
+ \int_{\Omega} f(\phi)  U(\bX, \bx)\, d \bx, 
\end{align}
where 
\[
U(\bX,\bx)= \rho_{\rm w} U_{\rm vdW} (\bX, \bx) + U_{\rm ele}(\bX, \bx),
\]
and $E = E[\bX]$ is the potential energy of molecular mechanical interactions of solute atoms located at $\bx_1, \dots, \bx_N$ inside the solute region $\Omega_{\rm m}$ (cf.\ Figure~\ref{fig:SchematicSystem}) and $\bX = (\bx_1, \dots, \bx_N) $. The
terms $U_{\rm vdW}(\bX, \bx)$ and $U_{\rm ele}(\bX, \bx)$ are exactly the same as $U_{\rm vdW} (\bx) $ defined
in \reff{UvdW} and $U_{\rm ele}(\bx)$ defined in \reff{Uele}, respectively, except we explicitly include $\bX$ to indicate the dependence on $\bX.$
The molecular mechanical interactions include the chemical bonding, bending, and torsion;  
the short-distance repulsion and the long-distance attraction;  and the Coulombic charge-charge interaction.  The corresponding sharp-interface model is written as
\begin{align}\label{General_sharpinterface2}
F[\bX, \Gamma] = E[\bX] + \gamma \text{Area}(\Gamma) + \int_{\Omega_{\text{w}}} U(\bX,\bx)\ d\bx, 
\end{align}
where $\Gamma$ represents the solute-solvent interface in the sharp-interface setting.

Let $\Omega$ be a nonempty, open, connected, and bounded subset of $\R^3$ with a Lipschitz-continuous boundary $\partial \Omega$. Let $\overline{\Omega}$ be the closure of $\Omega$ in $\R^3.$ 
Let $N \ge 1$ be an integer and denote 
\[
O_N = \left\{ \bX =  (\bx_1,\cdots, \bx_N)\in (\R^3)^N:  \bx_i\neq \bx_j\ 
\text{if}\  i\neq j\  \text{for}\ 1\le i,j\le N   \right\}. 
\]
Clearly $O_N$ is an open subset of $(\R^3)^N$. We assume that  $E: \overline{\Omega}^N \to \R \cup \{ + \infty\}$ is finite and continuous in $\Omega^N \cap O_N$, infinite in $\overline{\Omega}^N \setminus (\Omega^N \cap O_N)$ ,and has a finite lower bound $E_{\min}$ in $\overline{\Omega}^N$. We also assume 
\begin{align*}
E[\bX]\to +\infty\quad  \text{as}\  \min_{1 \le i < j \le N} |\bx_i-\bx_j|\to 0 \quad \text{or}\  \min_{1 \le i \le N} \mbox{dist}\,(
\bx_i, \partial\Omega) \to 0. 
\end{align*}
We shall assume $U(\bX,\bx): \overline{\Omega}^N \times \overline{\Omega} \to \R \cup \{ +\infty \}$ is finite and continous in $(\Omega^N \times \overline{\Omega}) \cap O_{N+1}$, infinite in $\overline{\Omega}^{N+1}
\setminus \left( ( \Omega^{N}\times \overline{\Omega} ) \cap O_{N+1} \right)$, and has a finite lower bound $U_{\min}$ in $\overline{\Omega}^N \times \overline{\Omega}$. We finally assume
\begin{align*}
U(\bX, \bx)\rightarrow+\infty\quad \text{as}\ \min_{0 \le i < j \le N} |\bx_i-\bx_j|\rightarrow 0 \quad \text{with}\ \bx_0 = \bx.
\end{align*}

We denote
\begin{align*}
\mathcal{M}_0 = \left\{ (\bX, A): X\in \overline{\Omega}^N, A \subseteq \Omega,  
A \mbox{ is Lebesgue measurable}\, \right\}. 
\end{align*}
For any $(\bX, A) \in \mathcal{M}_0$, we define 
\begin{align}\label{F0XA}
F_0[\bX,A] = E[\bX] + \gamma P_{\Omega}(A) + \int_{\Omega\backslash A} U(\bX, \bx) \, d\bx, 
\end{align}
where $P_{\Omega}(A)$, the perimeter of a set $A\subset \mathbb{R}^3$, is standardly defined by functions of bounded variation in $BV({\Omega})$ \cite{Giusti84,Ziemer_Book89,EvansGariepy_Book92}. Since $E$ and $U$ are bounded below, $F_0(\bX, A) > - \infty$. If $A\subset \Omega $ is open and smooth, with a finite perimeter in $\Omega$, then $F_0(\bX,A) = F(\bX, \Gamma)$, where $\Gamma = \partial A$ and $F$ is defined in
\reff{General_sharpinterface2} with $\Omega_{\rm w} = \Omega \setminus A$. Therefore, $F_0: \mathcal{M}_0 \to \R \cup \{ + \infty \}$ describes the free energy of a solvation system with $A$ being the solute region. 

As shown in \cite{LiZhao_SIAP13}, we have the existence of a global minimizer of the sharp-interface free energy functional $F_0: \mathcal{M}_0 \rightarrow \mathbb{R}\cup\{+\infty\}$:
\begin{theorem} \label{theorem:existence_sharpinterface}
There exists $(X,A)\in\mathcal{M}_0$ such that
\begin{align}
F_0[ X, A] = \inf_{(Y,B)\in \mathcal{M}_0} F_{0}[Y, B]. 
\end{align}
Moreover, this minimum value is finite.
\end{theorem}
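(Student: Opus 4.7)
The plan is to apply the direct method of the calculus of variations. I would first check that $F_0$ is not identically $+\infty$: for any $\bX_0 \in \Omega^N \cap O_N$ and any open smooth set $A_0$ with $\overline{A_0} \subset \Omega$ enclosing the atoms of $\bX_0$ at positive distance, $E[\bX_0]$ is finite, $P_\Omega(A_0)$ is finite, and $U(\bX_0, \cdot)$ is continuous and bounded on the compact set $\overline{\Omega \setminus A_0}$, so $F_0[\bX_0, A_0] < +\infty$. Together with the uniform lower bound $F_0 \geq E_{\min} + U_{\min} |\Omega|$ coming from the hypotheses on $E$ and $U$, this shows that $I := \inf_{\mathcal{M}_0} F_0$ is finite.

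Next I would extract a compact minimizing sequence. Pick $(\bX_k, A_k) \in \mathcal{M}_0$ with $F_0[\bX_k, A_k] \to I$ and $F_0[\bX_k, A_k] \leq I + 1$. Since $\overline{\Omega}^N$ is compact, a subsequence (not relabeled) satisfies $\bX_k \to \bX^* \in \overline{\Omega}^N$. The uniform upper bound on $F_0[\bX_k,A_k]$ together with the uniform lower bounds on $E[\bX_k]$ (via $E_{\min}$) and on the $U$-integral (via $U_{\min}|\Omega|$) forces $\gamma P_\Omega(A_k)$ to be uniformly bounded. Hence $\{\chi_{A_k}\}$ is bounded in $BV(\Omega)$, and by the $BV$ compactness theorem a further subsequence satisfies $\chi_{A_k} \to \chi_{A^*}$ in $L^1(\Omega)$ and a.e., for some Lebesgue measurable $A^* \subseteq \Omega$.

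Before proving lower semicontinuity of each term, I would verify that $\bX^* \in \Omega^N \cap O_N$. Otherwise the blow-up hypothesis on $E$ would give $E[\bX_k] \to +\infty$, which, combined with the lower bounds on the other two terms of $F_0$, contradicts the boundedness of the minimizing sequence. Knowing $\bX^* \in \Omega^N \cap O_N$ (an open set on which $E$ is continuous), I get $E[\bX_k] \to E[\bX^*]$. For the surface term, the standard $L^1$-lower semicontinuity of perimeter gives $P_\Omega(A^*) \leq \liminf_k P_\Omega(A_k)$.

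The main obstacle is controlling the $U$-integral, because $U$ is singular when $\bx$ coincides with one of the atoms. Since $\bX^* \in O_N$, for every $\bx \in \overline{\Omega} \setminus \{\bx_1^*, \dots, \bx_N^*\}$ the point $(\bX^*, \bx)$ lies in the continuity region of $U$, and $\bX_k \to \bX^*$ gives $U(\bX_k, \bx) \to U(\bX^*, \bx)$ for a.e.\ $\bx \in \Omega$. Combined with $\chi_{\Omega \setminus A_k} \to \chi_{\Omega \setminus A^*}$ a.e., the integrands $\chi_{\Omega \setminus A_k}(\bx)\,U(\bX_k, \bx)$ converge pointwise a.e.\ to $\chi_{\Omega \setminus A^*}(\bx)\,U(\bX^*, \bx)$, and they are bounded below by the integrable constant $\min(0, U_{\min})$, so Fatou's lemma yields
\begin{align*}
\int_{\Omega \setminus A^*} U(\bX^*, \bx)\, d\bx \leq \liminf_{k \to \infty} \int_{\Omega \setminus A_k} U(\bX_k, \bx)\, d\bx.
\end{align*}
Summing the three lower-semicontinuity inequalities gives $F_0[\bX^*, A^*] \leq I$, and since $(\bX^*, A^*) \in \mathcal{M}_0$ this forces equality. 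The finiteness of the minimum value follows from Step 1. The delicate point throughout is that the blow-up assumptions on $E$ and $U$ are exactly what is needed to rule out pathological limit configurations, and to keep $\bX^*$ inside the region where $U(\bX^*, \cdot)$ is integrable.
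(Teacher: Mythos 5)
Your proof is correct and follows the standard direct method (compactness of $\overline{\Omega}^N$ plus $BV$ compactness, lower semicontinuity of the perimeter, Fatou's lemma for the potential term, and the blow-up hypotheses to keep the limit configuration in $\Omega^N\cap O_N$), which is exactly the strategy of the proof the paper defers to (Theorem 2.1 of Li and Zhao, SIAM J.\ Appl.\ Math.\ 2013); the paper itself omits the argument. The only cosmetic point is that the uniform lower bound should read $F_0 \ge E_{\min} + \min(0,U_{\min})\,|\Omega|$, since when $U_{\min}>0$ the integral over $\Omega\setminus A$ need not dominate $U_{\min}|\Omega|$.
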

We omit the proof as it is similar to that of Theorem 2.1 in \cite{LiZhao_SIAP13}. Additionally, the minimal energy in Theorem \ref{theorem:existence_sharpinterface} can be approximated by free energies of certain ``regular" subsets, see Theorem 2.2 in \cite{LiZhao_SIAP13} for details.

We now consider the functional $F^{\epsilon}$ in \reff{General_phasefield2}. Let $\mathcal{M}=\bar{\Omega}^N\times H^1(\Omega)$, and $\epsilon_0\in(0,1]$ be sufficiently small. 
Then we have the existence of a global minimizer of the  functional $F^{\epsilon}:\mathcal{M} \rightarrow \mathbb{R}\cup\{+\infty\}$
for small $\epsilon > 0$. 
\begin{theorem}
For each $\epsilon\in(0,\epsilon_0]$, there exists $(X_{\epsilon},\phi_{\epsilon})\in\mathcal{M}$ with $X_{\epsilon}\in\Omega^N\cup O_N$ such that
\begin{align}\label{eqn:existence_phasefield}
F^{\epsilon}[X_{\epsilon}, \phi_{\epsilon}] = \inf_{(X, \phi)\in\mathcal{M}} F^{\epsilon} [X,\phi],
\end{align}
and this infimum value is finite. 
\end{theorem}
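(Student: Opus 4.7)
I will prove the theorem by the direct method in the calculus of variations, following the same general strategy as for Theorem~\ref{theorem:existence_sharpinterface} but with the additional analytic complication introduced by the phase-field variable $\phi \in H^1(\Omega)$.

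\textbf{Setup and coercivity.} First I would establish that $F^\epsilon$ is bounded below on $\mathcal{M}$ when $\epsilon$ is small enough, which in turn defines $\epsilon_0$. Since $f \ge 0$ and $U \ge U_{\min}$, we have
\begin{equation*}
F^\epsilon[\bX,\phi] \;\ge\; E_{\min} \,+\, \int_\Omega \Bigl[\tfrac{\gamma\epsilon}{2}|\nabla\phi|^2 \,+\, \tfrac{\gamma}{\epsilon} W(\phi) \,+\, U_{\min}\, f(\phi)\Bigr]\, d\bx.
\end{equation*}
Observe that $W(\phi)=18\phi^2(\phi-1)^2$ and $f(\phi)=(\phi^2-1)^2$ are both nonnegative quartic polynomials with leading coefficients $18$ and $1$ respectively. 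Hence, choosing $\epsilon_0 \in (0,1]$ small enough that $18\gamma/\epsilon_0 > |U_{\min}|+1$, an elementary polynomial estimate shows that for every $\epsilon \in (0,\epsilon_0]$ there exist $c_\epsilon>0$ and $C_\epsilon \in \mathbb{R}$ with
\begin{equation*}
\tfrac{\gamma}{\epsilon} W(s) \,+\, U_{\min}\, f(s) \,\ge\, c_\epsilon\, s^4 \,-\, C_\epsilon \qquad \forall s \in \mathbb{R}.
\end{equation*}
This yields both a lower bound on $F^\epsilon$ and the quartic coercivity in $\phi$ needed below. Finiteness of the infimum is obtained by exhibiting a test configuration: pick any $\bX \in \Omega^N \cap O_N$ with the atoms well separated from each other and from $\partial\Omega$, and any smooth $\phi$ that equals $1$ on small balls around each $\bx_i$ and transitions smoothly to $0$ away from them; then $E[\bX]<\infty$ and $f(\phi)\equiv 0$ on a neighborhood of each $\bx_i$, so the $U$-integral is finite by continuity of $U$ off the diagonal.

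\textbf{Extracting limits along a minimizing sequence.} Let $(\bX_k,\phi_k)$ be a minimizing sequence, so $F^\epsilon[\bX_k,\phi_k]\le M$. The coercivity estimate yields $E[\bX_k]\le M'$ uniformly, and the blow-up hypotheses on $E$ (as two atoms collide or an atom approaches $\partial\Omega$) force the $\bX_k$ to remain in a compact subset of $\Omega^N\cap O_N$. Hence, up to a subsequence, $\bX_k \to \bX_\epsilon \in \Omega^N\cap O_N$. Likewise the coercivity gives $\int|\nabla\phi_k|^2\,d\bx$ and $\int \phi_k^4\,d\bx$ uniformly bounded, so $\{\phi_k\}$ is bounded in $H^1(\Omega)$. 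Rellich--Kondrachov then produces $\phi_\epsilon \in H^1(\Omega)$ with, up to a further subsequence, $\phi_k \rightharpoonup \phi_\epsilon$ weakly in $H^1$, strongly in $L^p(\Omega)$ for every $p\in[1,6)$, and pointwise almost everywhere.

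\textbf{Lower semicontinuity.} Passage to the limit is routine for the first two terms: $E$ is continuous at $\bX_\epsilon \in \Omega^N\cap O_N$, the Dirichlet integral is weakly lower semicontinuous, and $W(\phi_k)\to W(\phi_\epsilon)$ in $L^1$ by strong $L^4$ convergence (or by Fatou applied to the nonnegative $W$). The interaction term is the only delicate piece because $U$ is only bounded below, not above, and may blow up on the null set $\bigcup_i\{\bx=\bx_{\epsilon,i}\}$. I would split it as
\begin{equation*}
\int_\Omega f(\phi_k)\, U(\bX_k,\bx)\, d\bx \;=\; \int_\Omega f(\phi_k)\bigl[U(\bX_k,\bx)-U_{\min}\bigr]\, d\bx \;+\; U_{\min}\int_\Omega f(\phi_k)\, d\bx.
\end{equation*}
The second integral converges to $U_{\min}\int f(\phi_\epsilon)\,d\bx$ by strong $L^4$ convergence of $\phi_k$. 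The integrand of the first integral is nonnegative and, by $\bX_k\to\bX_\epsilon$, the a.e.\ convergence $\phi_k\to\phi_\epsilon$, and continuity of $U$ off the diagonal, converges a.e.\ to $f(\phi_\epsilon)[U(\bX_\epsilon,\bx)-U_{\min}]$; Fatou's lemma then gives the desired lower semicontinuity. Combining the three lower-semicontinuity bounds yields $F^\epsilon[\bX_\epsilon,\phi_\epsilon]\le \liminf_k F^\epsilon[\bX_k,\phi_k] = \inf_{\mathcal{M}} F^\epsilon$, which is the assertion.

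\textbf{Main obstacle.} The substantive difficulty is the coercivity step, since $W(\phi)$ and $f(\phi)$ both grow like $\phi^4$ at infinity and $U$ is only bounded below; it is precisely here that the smallness of $\epsilon$ enters and defines $\epsilon_0$. Everything afterwards---compactness, Rellich--Kondrachov, and Fatou applied to the nonnegative part $U-U_{\min}$---is standard once this quartic dominance has been secured.
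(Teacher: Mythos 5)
Your proposal is correct and follows essentially the same route as the paper: coercivity for small $\epsilon_0$ by letting the quartic growth of $\frac{\gamma}{\epsilon}W(\phi)$ dominate $U_{\min}f(\phi)$, an upper bound from a test configuration, compactness of a minimizing sequence in $\overline{\Omega}^N\times H^1(\Omega)$, and Fatou's lemma applied to the nonnegative integrand $f(\phi)(U-U_{\min})$ for the interaction term. The only cosmetic difference is that the paper extracts an $\epsilon$-independent coefficient $\frac{\gamma}{2}\|\phi\|_{L^4}^4$ via the splitting $\frac{1}{\epsilon}\ge\frac{1}{2\epsilon}+\frac{1}{2\epsilon_0}$, whereas your constant $c_\epsilon$ may depend on $\epsilon$, which is immaterial here since $\epsilon$ is fixed.
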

\begin{proof}
The key to proving the existence of a global minimizer is to obtain the lower and upper boundedness for 
$F^{\epsilon} [X,\phi]$ for any $\epsilon\in(0,\epsilon_0]$. The upper bound is achieved easily as we can fix some $X^*$ and construct an associated $\phi^*_{\epsilon}$ such that $F^{\epsilon} [X^*,\phi^*_{\epsilon}]$ is bounded independent of $\epsilon$ (see Theorem 3.1 in \cite{LiZhao_SIAP13} for the detailed construction of $\phi^*_{\epsilon}$). For the lower bound, we have
\begin{align*}
F^{\epsilon}[X, \phi] 
&\ge E_{\rm min} + \frac{\gamma \epsilon}{2} \|\nabla\phi\|^2_{L^2(\Omega)} + 
\frac{\gamma}{2\epsilon} \int_{\Omega} W(\phi)\,dx
+  \frac{\gamma}{2\epsilon_0} \int_{\Omega} W(\phi)\, dx + 
U_{\min}\int_{\Omega}(\phi^2-1)^2\, dx\\
& =  E_{\min} + \frac{\gamma \ve}{2} \|\nabla\phi\|^2_{L^2(\Omega)} + 
\frac{\gamma}{2 \varepsilon} \| W(\phi) \|_{L^1(\Omega)} 
+ \frac{ \gamma}{2 } \| \phi \|_{L^4(\Omega)}^4 + \int_\Omega g(\phi) \, dx, 
\end{align*}
where
\[
g(\phi ) = \frac{\gamma}{2\epsilon_0}\left[ W(\phi)- \epsilon_0\phi^4 \right] + U_{\rm min}(\phi^2-1)^2. 
\]
Note that $g: \mathbb{R}\rightarrow\mathbb{R}$ is continuous, and $U_{\min}$ is finite. Hence, if  $\epsilon_0$ is sufficiently small, then  $g(s)\rightarrow +\infty$ as $|s|\rightarrow +\infty$. Then we have
\begin{align*}
F^{\epsilon}[X, \phi] 
& \ge  C + \frac{\gamma \ve}{2} \|\nabla\phi\|^2_{L^2(\Omega)} + 
\frac{\gamma}{2 \varepsilon} \| W(\phi) \|_{L^1(\Omega)} 
+ \frac{ \gamma}{2 } \| \phi \|_{L^4(\Omega)}^4
\end{align*}
with $C = E_{\min} + |\Omega|\inf_{s\in\mathbb{R}}g(s)$.

With the lower and upper bounds, we can choose a sequence of $(X_k,\phi_k)$ which is bounded in $\bar{\Omega}^N\times H^1(\Omega)$. Using the standard compactness argument, we can find a subsequence, not relabeled, that converges to $(X_{\epsilon},\phi_{\epsilon})\in (\Omega^N\cap O_N)\times H^1(\Omega)$. Finally the Fatou's lemma will yield \reff{eqn:existence_phasefield}.
\end{proof}

With the existence of global minimizers for sharp-interface energy $F_0$ and phase-field one $F^{\epsilon}$, we have the convergence of the global minimum free energies and the global free energy minimizers:
\begin{theorem}\label{t:Diffuse2Sharp}
Let $\epsilon_k \in (0, \epsilon_0] $   $ (k = 1, 2, \dots)$ be such that $ \epsilon_k \downarrow 0$. 
For each $k \ge 1$, let $(X_{\epsilon_k}, \phi_{\epsilon_k}) \in \mathcal{M}$ be such that 
\begin{equation}\label{minvek}
F^{\epsilon_k}[X_{\epsilon_k}, \phi_{\epsilon_k}]
 = \min_{(X,\phi)\in\mathcal{M}}F^{\epsilon_k}[X, \phi]. 
\end{equation}
Then there exists a subsequence of $\{ (X_{\epsilon_k}, \phi_{\epsilon_k}) \}_{k=1}^\infty$, 
not relabeled, such that $X_{\epsilon_k} \to X_0 $ in $(\R^3)^N$ for some 
$X_0 \in \Omega^N \cap O_N$ and $ \phi_{\epsilon_k}\rightarrow \chi_{A_0} $ 
in $L^{4-\lambda}(\Omega)$ for any $\lambda \in (0,1)$ and for some measurable subset 
$A_0\subseteq \Omega$ that has a finite perimeter in $\Omega$.  Moreover, 
\begin{align}\label{theorem3Eqn1}
\lim_{k\to \infty} F^{\epsilon_k}[X_{\epsilon_k}, \phi_{\epsilon_k}] = F_0[X_0, A_0]
\end{align}
and
\begin{align}
\label{theorem3Eqn2}
F_0[X_0, A_0]  = \min_{(X, A) \in \mathcal{M}_0} F_0[X, A]. 
\end{align}
\end{theorem}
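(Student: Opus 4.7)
The plan is to adapt the classical $\Gamma$-convergence argument for minimizers of Modica--Mortola-type functionals, as carried out in \cite{LiZhao_SIAP13}, to the new quartic interaction term $f(\phi)=(\phi^2-1)^2$. The argument splits naturally into compactness, a liminf inequality, and a recovery-sequence (limsup) inequality, and it is the interaction term that requires the most care because of the singularities of $U_{\rm vdW}$ and $U_{\rm ele}$ at the solute atoms.

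First I would establish compactness. The lower-bound estimate already derived in the proof of the previous existence theorem gives, for the sequence of minimizers $(X_{\epsilon_k},\phi_{\epsilon_k})$, uniform bounds on $\epsilon_k\|\nabla\phi_{\epsilon_k}\|_{L^2}^2$, on $\epsilon_k^{-1}\|W(\phi_{\epsilon_k})\|_{L^1}$, and --- thanks to the $(\phi^2-1)^2$ structure of the new $f$ --- on $\|\phi_{\epsilon_k}\|_{L^4}$. The matching uniform upper bound comes from inserting into $F^{\epsilon_k}$ a fixed admissible pair $(X^*,\phi_\epsilon^*)$ of hyperbolic-tangent type, constructed as in Theorem 3.1 of \cite{LiZhao_SIAP13}. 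Since $X_{\epsilon_k}\in\overline\Omega^N$, I extract a subsequence with $X_{\epsilon_k}\to X_0$; the standard Modica--Mortola compactness theorem then yields, along a further subsequence, $\phi_{\epsilon_k}\to\chi_{A_0}$ in $L^1(\Omega)$ and a.e., for some $A_0\subseteq\Omega$ of finite perimeter, and interpolating between $L^1$ convergence and the $L^4$ bound upgrades this to $L^{4-\lambda}(\Omega)$ for every $\lambda\in(0,1)$. To see that $X_0\in\Omega^N\cap O_N$ I would argue by contradiction: otherwise the blow-up hypothesis on $E$ together with continuity of $E$ on $\Omega^N\cap O_N$ would force $E[X_{\epsilon_k}]\to+\infty$, violating the uniform upper bound already established.

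Next I would prove the liminf inequality $\liminf_k F^{\epsilon_k}[X_{\epsilon_k},\phi_{\epsilon_k}]\ge F_0[X_0,A_0]$ term by term. Continuity of $E$ at $X_0\in\Omega^N\cap O_N$ gives $E[X_{\epsilon_k}]\to E[X_0]$. The choice of the constant $18$ in $W$ makes $\int_0^1\sqrt{2W(s)}\,ds=1$, so the classical Modica--Mortola liminf delivers $\liminf \gamma\int_\Omega[\tfrac{\epsilon_k}{2}|\nabla\phi_{\epsilon_k}|^2+\tfrac{1}{\epsilon_k}W(\phi_{\epsilon_k})]\,d\bx\ge \gamma P_\Omega(A_0)$. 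For the interaction term, a.e.\ convergence $\phi_{\epsilon_k}\to\chi_{A_0}$ gives $f(\phi_{\epsilon_k})\to f(\chi_{A_0})=\chi_{\Omega\setminus A_0}$ a.e.; writing $U=(U-U_{\min})+U_{\min}$ and noting $f\ge 0$, Fatou's lemma applied to $f(\phi_{\epsilon_k})\bigl(U(X_{\epsilon_k},\bx)-U_{\min}\bigr)$, together with the continuity of $X\mapsto U(X,\bx)$ off the diagonal, yields $\liminf_k \int_\Omega f(\phi_{\epsilon_k})U(X_{\epsilon_k},\bx)\,d\bx\ge \int_{\Omega\setminus A_0}U(X_0,\bx)\,d\bx$.

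For the matching upper bound I would exploit the minimality of $(X_{\epsilon_k},\phi_{\epsilon_k})$: for any $(X,A)\in\mathcal{M}_0$ with $F_0[X,A]<\infty$, construct a recovery sequence $\tilde\phi_{\epsilon_k}$ of the standard hyperbolic-tangent type across $\partial A$, approximating general $A$ by smooth sets via Theorem 2.2 of \cite{LiZhao_SIAP13}. Dominated convergence (using local integrability of $U(X,\cdot)$ on $\Omega$ away from the solute points of $X$, which is where $f(\tilde\phi_{\epsilon_k})\to\chi_{\Omega\setminus A}$ is supported up to a thin interfacial layer) gives $\limsup_k F^{\epsilon_k}[X,\tilde\phi_{\epsilon_k}]\le F_0[X,A]$, and minimality lifts this to $\limsup_k F^{\epsilon_k}[X_{\epsilon_k},\phi_{\epsilon_k}]\le F_0[X,A]$. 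Combining with the liminf and taking the infimum over $(X,A)\in\mathcal{M}_0$ yields both \reff{theorem3Eqn1} and \reff{theorem3Eqn2}. The main obstacle I expect is precisely the liminf for the interaction term: one cannot simply invoke weak $L^p$ convergence of $f(\phi_{\epsilon_k})$ against the singular kernel $U(X_{\epsilon_k},\cdot)$, so the argument must combine a.e.\ convergence, nonnegativity after subtracting $U_{\min}$, and the fact that $X_0\in O_N$ lies in the interior to apply Fatou cleanly near the atomic singularities.
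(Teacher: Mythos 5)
Your proposal is correct and follows essentially the same route as the paper, which in fact omits its own proof and defers to the argument of Li and Zhao (SIAM J.\ Appl.\ Math., 2013): coercivity from the existence proof plus Modica--Mortola compactness (with the $L^4$ bound supplied by the new $f$), a term-by-term liminf inequality, and recovery sequences built on the approximation result quoted as Theorem 2.2 of that reference. The one step you should spell out is that $\lim_{k\to\infty}\int_\Omega f(\phi_{\epsilon_k})\,d\bx=|\Omega\setminus A_0|$, which is needed to absorb the $U_{\min}\int_\Omega f(\phi_{\epsilon_k})\,d\bx$ term in your Fatou argument since $U_{\min}<0$ here; it follows from the a.e.\ convergence together with the bound $\int_\Omega W(\phi_{\epsilon_k})\,d\bx=O(\epsilon_k)$, which controls $\int\phi_{\epsilon_k}^4$ on the set where $|\phi_{\epsilon_k}|$ is large.
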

The proof is omitted as it is similar to the one in  \cite{LiZhao_SIAP13}.


\section{Numerical Methods}\label{s:NumericalMethods}

\subsection{Equivalent reformulation with a linear splitting}

We first adopt an analogous linear splitting scheme that has been used  in designing stabilized numerical methods for the classical Allen--Cahn equation \cite{DuZhu_2005, Ju_JSC2015, Xu_SIAM2006, Yang_JCP2007, WangJuDu_JCP16} 
to rewrite $W'(\phi) = 36(\phi^2-\phi)(2\phi-1) $ as 
\begin{align*}
W'(\phi) = \kappa\phi + \left[  W'(\phi) - \kappa \phi \right], 
\end{align*}
where $\kappa\ge0$ satisfying
\begin{align*}
\kappa \ge \dfrac{1}{2}\max\{ 0, \max_{0\le\phi\le1} W''(\phi) \} = 18.
\end{align*}
Similarly, we rewrite $f'(\phi)$ as 
\begin{align*}
f'(\phi) = \mu \phi + \left( f'(\phi) - \mu\phi \right),
\end{align*}
where $\mu\ge0$ satisfies
\begin{align*}
\mu \ge \dfrac{1}{2}\max\{ 0, \max_{0\le\phi\le1} f''(\phi) \} = 4.
\end{align*}

Note that the potentials $U_{\text{vdW}} $ and $U_{\text{ele}}$ are unbounded near $\textbf{x}_i$ 
for each $\bx_i\in \Omega_{\rm m}$.  Since the equilibrium phase field $\phi$ is expected to 
vanish in a small neighborhood of $\bx_i$ for each $i$, we truncate these
potentials with a numerical parameter $r_{\rm cut} > 0$. 
The truncated potential $U_{\rm vdW} $ is the sum of the 
truncated Lennard-Jones potentials
$U^{(i)}_{\rm LJ, cut}(r) $, defined by $U^{(i)}_{\rm LJ, cut}(r) = U^{(i)}(r)$
if $r \ge r_{\rm cut}$ and $U^{(i)}_{\rm LJ, cut}(r) = U^{(i)}_{\rm LJ} (r_{\rm cut} )$ otherwise. 
Similarly, we can truncate $U_{\rm ele}$ by modifying 
$(\bx - \bx_i) / | \bx - \bx_i |^3$ to $V^{(i)}(|\bx-\bx_i|) (\bx-\bx_i)/|\bx-\bx_i|^2$ for each $i$, 
where $V^{(i)} (r)  = 1/ r $ if $r \ge r_{\rm cut}$ and 
$V^{(i)} (r) = 1/r_{\rm cut}$ otherwise. 

For simplicity, let us still denote these modified potentials by 
$U_{\rm vdW} $ and $U_{\rm ele}$, respectively. Let us set 
\[
\nu = \sup_{x\in\Omega} |\rho_{\text{w}}U_{\text{vdW}}+U_{\text{ele}} |.
\]

Then the equation \reff{General_gradientflow} in a stabilized form reads
\begin{align}\label{eqn:gradientflow}
\partial_t\phi &= \left[\gamma\left( \epsilon\Delta\phi - \dfrac{\kappa}{\epsilon}\phi\right) - \mu\nu\phi \right] + \Big[ -\dfrac{\gamma}{\epsilon}\left(W'(\phi) - \kappa\phi\right)  - f'(\phi)(\rho_{\text{w}}U_{\text{vdW}} + U_{\text{ele}}) + \mu\nu\phi\Big] \nonumber \\
& = \mathcal{L}(\phi) + \mathcal{N}(\phi),
\end{align}
where the linear term is
\begin{align*}
 \mathcal{L}(\phi)  = \gamma\left( \epsilon\Delta\phi - \dfrac{\kappa}{\epsilon}\phi\right) - \mu\nu\phi,
\end{align*}
and the nonlinear term is
\begin{align*}
\mathcal{N}(\phi)  =  -\dfrac{\gamma}{\epsilon}\left(W'(\phi) - \kappa\phi\right) - f'(\phi)(\rho_{\text{w}}U_{\text{vdW}} + U_{\text{ele}}) + \mu\nu\phi.
\end{align*}
The new reformulation \reff{eqn:gradientflow} will be used for the time-discretization based on the exponential 
time differentiation (ETD)  Runge-Kutta method (ETDRK).

\subsection{Spectral spatial discretization under periodic boundary condition}

We consider a rectangular system domain $\Omega\subset\mathbb{R}^3$
\[
\Omega = \{-L_x<x<L_x, -L_y<y<L_y, -L_z<z<L_z\}
\]
for some positive numbers $L_x$, $L_y$, and $L_z$,
and impose the periodic boundary condition. We discretize $\Omega$ by a rectangular mesh which is uniform in each direction as follows:
\[
\bx_{ijk} = (x_i,y_j,z_k) = (-L_x + i h_x, -L_y + j h_y,-L_z + k h_z)
\]
for $0\le i \le N_x$, $0\le j \le N_y$, and $0\le k \le N_z$; $h_x = 2L_x/N_x$, $h_y = 2L_y/N_y$, and $h_z = 2L_z/N_z$. We choose a time step $\Delta t > 0$ and set $t_n = n \Delta t$. 

Let $\phi_{ijk}^{(n)}\approx \phi(x_i,y_j, z_k, t_n) = \phi(\bx_{ijk}, t_n)$ denote the approximate solution at grid $\bx_{ijk}$ and time $t_n$. Denote the approximate solution in array form as $ \Phi = (\phi_{ijk})_{0:N_x-1, 0:N_y-1, 0:N_z-1}, $and denote its discrete Fourier transform (DFT) by $ \hat{\Phi} = (\hat{\phi}_{ijk})_{0:N_x-1, 0:N_y-1, 0:N_z-1}. $ Notice that  the Laplacian operator $\Delta$ in the spectral space corresponds to the spectrum
\begin{align*}
\lambda_{ijk} = -\lambda_x^2(i) - \lambda_y^2(j) - \lambda_z^2(k), 
\end{align*}
where
\begin{align*}
& \lambda_x(i) = 
\begin{cases}
\pi i/L_x & \mbox{if } 0\le i \le N_x/2, \\
\pi (N_x-i)/L_x & \mbox{if } N_x/2\le i \le N_x-1, 
\end{cases}\\
& \lambda_y(j) = 
\begin{cases}
\pi j/L_y & \mbox{if } 0\le j \le N_y/2,  \\
\pi (N_y-j)/L_y & \mbox{if }  N_y/2\le j \le N_y-1,  
\end{cases}\\
& \lambda_z(k) = 
\begin{cases}
\pi k/L_z & \mbox{if } 0\le k \le N_z/2, \\
\pi (N_z-k)/L_z & \mbox{if } N_z/2\le k \le N_z-1.  
\end{cases}
\end{align*}
Taking the fast Fourier transform (FFT) \cite{Loan_1992} on both sides of the equation
 (\ref{eqn:gradientflow}) yields now
\begin{align}\label{eqn:phihat}
\hat{\Phi}_t = \mathbf{L}\odot\hat{\Phi} + \widehat{\mathcal{N}(\Phi)}, 
\end{align}
 where $\mathbf{L}\odot\hat{\Phi}$ is the FFT of $\mathcal{L}(\phi)$ and is given by 
\begin{align*}
&\mathbf{L}\odot\hat{\Phi} = (l_{ijk}\hat{\phi}_{ijk})_{0:N_x-1, 0:N_y-1, 0:N_z-1},\\
& l_{ijk} = \gamma\left(\epsilon\lambda_{ijk}-\dfrac{\kappa}{\epsilon}\right) - \mu\nu. 
\end{align*}
Note that, since  $\gamma, \epsilon, \kappa, \mu$ and $\nu$ are all positive, and $\lambda_{ijk}\le0$, we have $l_{ijk}<0$. Therefore the following point-wise version of (\ref{eqn:phihat}) is asymptotically stable:
\begin{align}\label{eqn:phihat_pointwise}
\partial_t \hat{\phi}_{ijk}  = l_{ijk}\hat{\phi}_{ijk} + \left[\widehat{\mathcal{N}(\Phi)}\right]_{ijk},\quad 0\le i\le N_x-1,\  0\le j \le N_y-1,\  0\le k\le N_z-1. 
\end{align}
We will develop next high-order Runge-Kutta approximations based on the exponential time differencing for the time integration of (\ref{eqn:phihat_pointwise}).

\subsection{Exponential time differencing Runge-Kutta approximations}

In this section, we adopt the exponential time differencing (ETD) method \cite{Cox_JCP2002, Kassam_SIAMSC2005, JuZhangDu_CMS15, WangJuDu_JCP16} to explicitly and accurately solve the semi-discrete system (\ref{eqn:phihat}) or (\ref{eqn:phihat_pointwise}). Let $\Delta t_n$ be the time step size at time $t_n$: $t_{n+1} = t_n + \Delta t_n$. Integrating the equation (\ref{eqn:phihat_pointwise}) over a single time step from $t_n$ to $t_{n+1}$ yields
\begin{align}\label{eqn:ETD}
\hat{\phi}_{ijk}(t_{n+1}) = e^{l_{ijk}\Delta t_n} \hat{\phi}_{ijk}(t_n) +  e^{l_{ijk}\Delta t_n} \int_0^{\Delta t_n} e^{-l_{ijk}\tau} \left[ \widehat{\mathcal{N}(\Phi})(t_n+\tau) \right]_{ijk}\ d\tau, 
\end{align}
which is exact. We apply  various ETD-based methods to this equation as follows: approximate the nonlinear part $[\widehat{\mathcal{N}(\Phi)}]_{ijk}$ by polynomial interpolations and then perform exact integrations on the new integrands \cite{Ju_JSC2015, Cox_JCP2002}.

Denote by $\hat{\Phi}^n = (\hat{\phi}^n_{ijk})$ the numerical approximation of $\hat{\Phi}(t_n) = (\hat{\phi}_{ijk}(t_n))$.  Then the first-order scheme by the ETD Euler approximation, ETD1 (or ETD1RK), is given by
\begin{align*}
&\hat{\Phi}^{n+1} = \text{ETD1RK}(\hat{\Phi}^{n}, \Delta t_n, \mathcal{L}, \mathcal{N}):  \\
& \hat{\phi}^{n+1}_{ijk} = e^{l_{ijk}\Delta t_n}\hat{\phi}^{n}_{ijk} + l^{-1}_{ijk}(e^{l_{ijk}\Delta t_n}-1) \left[\widehat{\mathcal{N}(\Phi^n)}\right]_{ijk} .
\end{align*}
Higher-order ETD schemes can be constructed based on multi-step or Runge-Kutta approximations. The 2nd, 3rd and 4th order Runge-Kutta schemes, which we refer as ETD2RK, ETD3RK, and ETD4RK, respectively, can be found in \cite{Cox_JCP2002}. For the equation (\ref{eqn:phihat}) we have the 2nd order scheme (ETD2RK):
\begin{align*}
\hat{\Phi}^{n+1} &= \text{ETD2RK}(\hat{\Phi}^{n}, \Delta t_n, \mathcal{L}, \mathcal{N}):\\
&
\begin{cases}
\mathbf{A} = (a_{ijk}) = \text{ETD1RK}(\hat{\Phi}^{n}, \Delta t_n, \mathcal{L}, \mathcal{N}), \\
\hat{\phi}^{n+1}_{ijk} = a_{ijk} + \Delta t_n^{-1}l^{-2}_{ijk}(e^{l_{ijk}\Delta t_n} - 1 - l_{ijk}\Delta t_n) \left[ \widehat{\mathcal{N}(\check{\mathbf{A}}) }  - \widehat{\mathcal{N}(\Phi^n)}  \right]_{ijk}, 
\end{cases}
\end{align*}
where $\check{\mathbf{A}}$ stands for the inverse discrete Fourier transform (iDFT) of $\mathbf{A}$. The 4th order scheme (ETD4RK) reads
\begin{align*}
\hat{\Phi}^{n+1} &= \text{ETD4RK}(\hat{\Phi}^{n}, \Delta t_n, \mathcal{L}, \mathcal{N}):\\
&
\begin{cases}
\mathbf{A} = (a_{ijk}) = \text{ETD1RK}((\hat{\Phi}^{n}, \Delta t_n/2, \mathcal{L}, \mathcal{N}), \\
\mathbf{B} = (b_{ijk}) = e^{l_{ijk}\Delta t_n/2}\hat{\phi}_{ijk}^{n} + l^{-1}_{ijk} (e^{l_{ijk}\Delta t_n/2}-1) \left[ \widehat{\mathcal{N}(\check{\mathbf{A}}) } \right]_{ijk}, \\
\mathbf{C} = (c_{ijk}) = e^{l_{ijk}\Delta t_n/2}a_{ijk} + l^{-1}_{ijk} (e^{l_{ijk}\Delta t_n/2}-1) \left[ 2\widehat{\mathcal{N}(\check{\mathbf{B}})} - \widehat{\mathcal{N}(\Phi^n)} \right]_{ijk},  \\
\hat{\phi}^{n+1}_{ijk} = e^{l_{ijk}\Delta t_n}\hat{\phi}_{ijk}^{n}  + \Delta t_n^{-2}l^{-3}_{ijk}\times\\
\qquad\qquad \Bigg\{ \bigg( -4 - l_{ijk}\Delta t_n + e^{l_{ijk}\Delta t_n}(4-3l_{ijk}\Delta t_n + l^2_{ijk}\Delta t_n^2)  \bigg) \left[ \widehat{\mathcal{N}(\Phi^n)} \right]_{ijk}  \\
\qquad\qquad\qquad +  2\bigg( 2 + l_{ijk}\Delta t_n + e^{l_{ijk}\Delta t_n}(-2+l_{ijk}\Delta t_n)  \bigg) \left[ \widehat{\mathcal{N}(\check{\mathbf{A}})} +  \widehat{\mathcal{N}(\check{\mathbf{B}})} \right]_{ijk}\\
\qquad\qquad\qquad +  \bigg( -4 - 3l_{ijk}\Delta t_n - l^2_{ijk}\Delta t_n^2 + e^{l_{ijk}\Delta t_n}(4-l_{ijk}\Delta t_n)  \bigg) \left[ \widehat{\mathcal{N}(\check{\mathbf{C}})} \right]_{ijk} \Bigg\} .
\end{cases}
\end{align*}

\section{Numerical Tests and Applications}\label{s:Tests}

In this section,  we first  validate our theory, particularly the incorporation of the new term $f(\phi)$ in the gradient-flow dynamics \reff{General_gradientflow}, by comparing it to the old model (\ref{f03}) for a one-particle system. For reference, a table of parameter values is listed in Table \ref{table:parameters}. We then apply our ETD-based Runge-Kutta method to a two-plate system. We compare the ETD1RK, ETD2RK and ETD4RK for the numerical efficiency, and the corresponding convergence rates. Then for different distances of separation of the two parallel plates with various charge combinations, we calculate the different components of the mean-field free-energy with loose and tight initial surfaces.

\begin{table}[hbtp]
\begin{center}
{\small 
\begin{tabular}{ | l | p{7.5cm} | }
  \hline
   $P = 0\ pN/${\AA}$^2$ & Pressure  \\
  $T = 300\ \text{K}$ & Tempature  \\
  $\gamma_0 = 0.175\ k_{\text{B}}T/${\AA}$^2$ & Surface tension  \\
  $\rho_{\text{w}} = 0.0333$ {\AA}$^{-3}$ & the constant solvent (water) density. \\
  $\varepsilon_i = \varepsilon_{\text{LJ}} = 0.3\ k_{\text{B}}T,\ i = 1:N$ & the depth of the Lennard-Jones potential well associated with the $i$th solute atom.  \\
  $\sigma_i = \sigma_{\text{LJ}} = 3.5$ {\AA},  $\ i = 1:N$ & the finite distance at which the Lennard-Jones potential of $i$th solute atom is zero. \\
  $r_{\text{cut}} = 0.7\sigma_{\text{LJ}}$ & the radius of truncation for potential \\
  $\varepsilon_0 = 1.4321\times 10^{-4}\ \text{e}^2/(k_{\text{B}}T${\AA}$)$ & vacuum permittivity \\
  $\varepsilon_{\text{m}} = 1$ & relative permittivity of the solute \\
  $\varepsilon_{\text{w}} = 80$ & relative permittivity of the solvent (water) \\
  $Q_i$ in units e & partial charge of the $i$th solute atom at $\textbf{x}_i$, which vary in different examples. \\
   $\epsilon$ in units {\AA} & the interfacial width of the phase field $\phi$, which  vary in different examples \\
  \hline
\end{tabular}
}
\caption{\small Parameters in the model.}
\label{table:parameters}
\end{center}
\end{table}

\vspace{-10 mm}

\subsection{One-particle system}

We now validate our theory by considering a one-particle system $(N = 1)$. We place a single point charge $Q$ at the origin immersed in water.  As the one-particle system is radially symmetric, the phase-field free-energy functional \reff{General_phasefield} reduces to that of radially symmetric phase fields $\phi = \phi(r)$ ($N = 1$ and $Q_1 = Q$): 
\begin{align}\label{Fone}
F^{\epsilon,\text{rad}}[\phi] &= 4 \pi \gamma_0 \int_0^\infty 
\left[ \frac{\epsilon}{2} |\phi'(r)|^2+\frac{1}{\epsilon} W(\phi(r))\right] r^2\, dr \nonumber\\
& + 4\pi\rho_{\text{w}} \int_0^\infty f(\phi)  U_{\text{vdW}}(r) r^2 \, dr 
 +  \dfrac{Q^2}{8\pi\varepsilon_0}\left(\dfrac{1}{\varepsilon_{\text{w}}} - \dfrac{1}{\varepsilon_{\text{m}}}\right) 
\int_0^\infty f(\phi)/r^2 dr,  
\end{align}
where $U_{\text{vdW}}(r)$ is given by \reff{UvdW} with $N = 1$, $\bX = \textbf{0}$, 
and  $\ve_1$  and $\sigma_1$ are given in Table \ref{table:parameters}.

Taking $Q = 2 e$, $\epsilon = 0.1${\AA}, computational domain $= [0, 5]$, $\Delta x = 5\times 10^{-4}$ and $\Delta t = 10^{-6}$,  and other parameter values from Table \ref{table:parameters}, we solve the gradient-flow dynamics $\partial_t\phi = -\delta F^{\epsilon,\text{rad}}[\phi]/\delta \phi$. The numerical scheme we adapt here is the Crank--Nicolson method \cite{LeVeque_2007} and Thomas algorithm \cite{Higham_2002} for the corresponding tri-diagonal linear system. 

\begin{figure}[htbp]
\centerline{
\includegraphics[width=200mm]{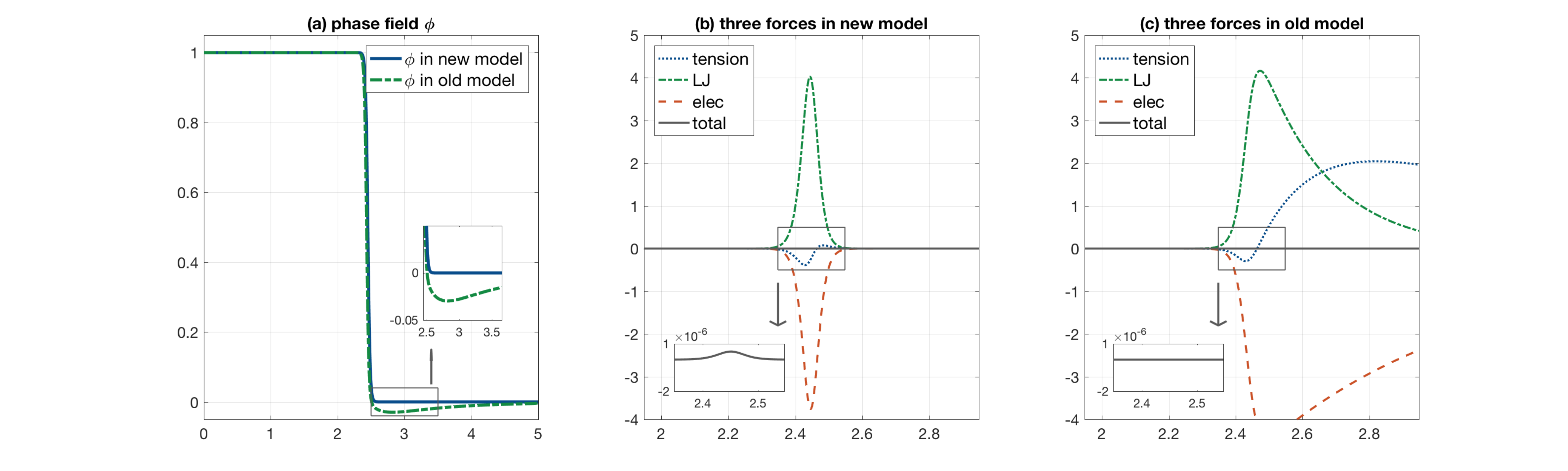}
$ \quad $ }
\vspace{-4 mm}
\caption{\small Numerical comparison between the new model \reff{f_term} and the old one \reff{f03} for the one-particle system. (a) The two phase-field functions $\phi$ at $t=10$ in which the $\phi$ of new model presents a desirable
hyperbolic tangent   profile, but the $\phi$ of old model displays a deviation of $O(10^{-2})$ from 0 as seen in the inset.  (b) The three forces in the new model (surface tension, Lennard-Jones force, and the electrostatic force) are
localized only near the interface and the sum is balanced up to $O(10^{-6})$.  (c) The three forces in the old model make nonzero contributions in the region of $\{\phi\approx 0\}$. All the three subfigures are plotted near the interface $[R_0-0.5, R_0+0.5]$, where $R_0=2.4479$ is determined numerically by $\phi(R_0)=0.5$ using the new model.}
\label{fig:OneParticle}
\end{figure}

Figure {\ref{fig:OneParticle}} presents the numerical comparison between our new model 
$f(\phi) = (\phi^2-1)^2$ and the old model $f(\phi) = (\phi-1)^2$. 
Our new phase-field implementation improves the old ones in several aspects. 
(1) The new model displays a better hyperbolic tangent profile than the old one as seen in Figure 2 (a). More specifically, the equilibrium phase field $\phi$ in the new model shows a desirable hyperbolic tangent shape which monotonically changes its value from 1 to 0, while the old model presents some unphysical feature near the interfacial region, where $\phi$ has a deviation of $O(0.01)${\AA} away from $0$ and takes negative values. 
(2) The new model maintains the force localization near the interface as seen in Figure 2 (b). In the old model, all the three forces have nonzero contributions in the region of $\{\phi\approx 0\}$.
(3) The force localization 
in the new model allows us to use a much smaller computational box that encloses the entire solute region and solute-solvent interface which greatly improves the computational efficiency.  Of course, the deviation of $\phi$ can be mitigated by letting $\epsilon\rightarrow 0$ by the theoretical study in \cite{LiZhao_SIAP13}. However, in real applications, especially in the 3D simulations, $\epsilon$ has to remain relatively large to reduce the computational cost. Therefore, the new model is advantageous for keeping the hyperbolic tangent profile of $\phi$ and localizing  the forces only near the interfaces even for a relatively large $\epsilon$. It is worth mentioning that the force localization due to $f(\phi)=(\phi^2-1)^2$ occurs not only at the equilibrium, but in the entire gradient-flow dynamics. Therefore it can potentially be used to study non-equilibrium dynamics such as  cell motion \cite{CamleyZhao_PRE_2017}. 

\begin{table}
\begin{center}
{\small 
\begin{tabular}{|c|c|c|c|c|c|c|cl}
\hline
 $Q$ & Optimal Radii/Energy & $\epsilon$ = 0.5 & $\epsilon$ = 0.2 & $\epsilon$ = 0.05 & $\epsilon$ = 0.02 & $\epsilon=0$ 
 \\
\hline
        & $R_{\min}$        & 3.08013 & 3.06058 & 3.055 &  3.05411 & 3.054 \\
        & $F_{\rm surf}$   &20.90351 & 20.60341 &  20.514 & 20.50996 & 20.511  \\
0.0   & $F_{\rm vdW}$  &-2.55793 & -2.61359 & -2.627 &  -2.63751 & -2.644\\
        & $F_{\rm elec}$   &0.00000  & 0.00000  & 0.000 &  0.00000  & 0.000     \\
        & $F_{\rm tot}$      &18.34557 & 17.98982 &  17.887 & 17.87245 & 17.867 \\
\hline
       & $R_{\min}$         & 2.987 & 2.967 & 2.961 &  2.960     & 2.960 \\
       & $F_{\rm surf}$    &19.672 &  19.366 &  19.275 & 19.266 & 19.267  \\
0.5  & $F_{\rm vdW}$   &-0.980 & -1.025 & -1.036 &  -1.042  & -1.054\\
       & $F_{\rm elec}$    &-23.080 & -23.162 & -23.177 &  -23.177 & -23.173 \\
       & $F_{\rm tot}$       &-4.388 & -4.822 &  -4.938 & -4.953    & -4.960 \\
\hline
       & $R_{\min}$         &  2.79823  &   2.77930   & 2.77252    &  2.77154   & 2.771 \\
       & $F_{\rm surf}$    & 17.32496  & 16.99413  & 16.90424 & 16.89034  & 16.886  \\
1.0  & $F_{\rm vdW}$   &   5.10415  &   5.11240  &   5.11524 &   5.11501   &  5.113  \\
       & $F_{\rm elec}$    &-98.54247  &-98.92329 &-99.00642 &-99.01096 & -99.012 \\
       & $F_{\rm tot}$       &-76.11335  &-76.81676 &-76.9869 &-77.00560 &-77.014 \\
\hline
       & $R_{\min}$          &  2.61690  &   2.60079   & 2.59418   &  2.59318      & 2.593 \\
       & $F_{\rm surf}$     & 15.31472  & 14.89081  & 14.79960 & 14.78639      & 14.782  \\
1.5  & $F_{\rm vdW}$    & 17.83743   & 17.95046  & 17.96966   &  17.97163    & 17.971\\
       & $F_{\rm elec}$     &-236.98862  &-237.86930 &-238.08700 &-238.10064 & -238.105 \\
       & $F_{\rm tot}$       &-203.83648   &-205.02804   &-205.31774 &-205.34262 & -205.354 \\
\hline
         & $R_{\min}$        &  2.46839  &   2.456   & 2.44947    &  2.44851      & 2.448 \\
         & $F_{\rm surf}$   & 13.94052  & 13.304  &  13.19387 & 13.18262      & 13.178  \\
2.0    & $F_{\rm vdW}$  & 38.47104  & 38.676  & 38.76414   &  38.75819    & 38.757\\
         & $F_{\rm elec}$   & -446.41599 & -447.827 & -448.28042 &  -448.30575 & -448.317 \\
         & $F_{\rm tot}$     & -394.00443   & -395.848   & -396.32242 & -396.36494  & -396.381 \\
\hline
\end{tabular}
}
\caption{\small A comparison of numerical results obtained by the phase-field calculations
(solving the gradient-flow dynamics  (\ref{Fone}))
and by the sharp-interface calculations (minimizing numerically the function 
$G[R]$ in \reff{GR}) for the solvation of a single-particle system. The 
sharp-interface (indicated with $\epsilon = 0$) results are presented in the last column. See the text for the units.}
\label{table:oneparticle}
\end{center}
\end{table}

We now compare our results of phase-field computations  with those of the sharp-interface implementation. For a one-particle system, the sharp-interface free-energy functional \reff{General_sharpinterface2} is a one-variable function of the radius $R$ of the solute sphere centered at the origin \cite{WangEtal_VISMCFA_JCTC12} 
\begin{align}\label{GR}
F[\Gamma]: = F[R] = 4 \pi \gamma_0  R^2 
 + 16 \pi \rho_{\rm w} \ve \left( \frac{ \sigma^{12}}{9R^9}
- \frac{\sigma^{6}}{3R^3}  \right) 
 + \frac{Q^2}{8 \pi \ve_0 R} \left( \frac{1}{\ve_{\rm w}} - \frac{1}{\ve_{\rm m}}\right). 
\end{align}
This one-variable function can be minimized numerically with a very high accuracy. 

We test on a set of $Q$-values: $Q = 0.0e, 0.5e, 1.0e, 1.5e, 2.0e. $ We use both the sharp-interface and phase-field models to calculate the optimal radius $R_{\rm min}$, the total minimum free energy $F_{\rm tot}$, and the corresponding surface energy $F_{\rm surf}$, solute-solvent van der Waals interaction energy $F_{\rm vdW}$,  and the electrostatic energy $F_{\rm elec}$, respectively.  For our phase-field calculations, we use different values of  the numerical parameter $\epsilon$.   Table~\ref{table:oneparticle} shows our computational results.  It is clear that as $\epsilon$ becomes smaller, the result of the phase-field model is also closer to that of the sharp-interface model.

\subsection{Two parallel plates}

We now consider the system of two parallel molecular plates that has been studied by the molecular dynamics simulations \cite{KoishiEtal_PRL2004} and by the sharp-interface VISM \cite{WangEtal_VISMCFA_JCTC12}. Each plate consists of $N_p \times N_p$ fixed CH$_2$ atoms with $N_p = 6$ and the atom-to-atom distance $d_0 = 2.1945$ {\AA}.   The plate has a square length of about $30$ {\AA}. The two plates are placed in parallel with a center-to-center distance $d$. We use the parameter values  listed in Table \ref{table:parameters}. To study the charge effect, as in \cite{WangEtal_VISMCFA_JCTC12}, we assign central charges $q_1$ and $q_2$ to the first and second plates, respectively, with $|q_1| = | q_2|$.  The total charges of these two plates are $36 q_1 $ and $ 36 q_2,$ respectively.  

Let us consider the gradient-flow dynamics (\ref{eqn:gradientflow}) starting with two parallel plates of separation $d = 12${\AA}. We choose the uniform spatial mesh $256^3$ with $L_x=L_y = L_z = 18${\AA} (i.e., the mesh size $h = 2L_x/256$) and set $\epsilon = 0.5$. The time step is taken uniformly as $\Delta t = 0.05$. We use two types
of initial phase-field functions. One is called a {\it loose initial}, such as the characteristic function of a box 
\[
\{ (x,y,z): |x|\le (N_p-1)d_0 + \sigma_{\text{LJ}},\ |y|\le \dfrac{d}{2}+\sigma_{\text{LJ}},\ |z|\le (N_p-1)d_0+\sigma_{\text{LJ} } \}
\]
that contains the two plates. The other is called a {\it tight initial}, which can be the characteristic function
of two boxes that wrap up the two plates separately. We set the stopping criteria for our time
iteration by 
\begin{align*}
\dfrac{F^{\epsilon}\big[\phi^{(n+1)}\big] - F^{\epsilon}\big[\phi^{(n)}\big] }{\Delta t_n} < \text{TOL} = 10^{-3}
\end{align*}

Figure \ref{fig:twoplates01} shows stable equilibrium solute-solvent surfaces of two-plate system obtained by solving the gradient-flow dynamics (\ref{eqn:gradientflow}) with loose  initials of separation $d_0 = 12${\AA}. The partial charges are $(q_1,q_2)=(0.1e,0.1e)$, $(-0.1e,0.1e)$, $(0.2e,0.2e)$, $(-0.2e,0.2e)$, respectively, from left to right. Note that the larger the partial charges are, the tighter the solute-solvent surfaces wrap the two plates. Meanwhile the surfaces wrap tighter when the partial charges change from $+/+$ to $-/+$.

Figure \ref{fig:twoplates_energy} shows the energy evolution for the gradient-flow dynamics of the two-plate system with loose  initial of separation $d_0 = 12${\AA} and $(q_1,q_2) = (0.2e,0.2e)$. The stabilized ETD1RK, ETD2RK, and ETD4RK schemes are adopted with different values of time step size $\Delta t = 1, 0.1$ and $0.01$. The first row compares the energy curves under different time step sizes for each of the three stabilized ETDRK schemes, while the second row reorganizes the curves using different schemes but with the same time step size. It is easy to see that all the schemes work stably with all time step sizes, and converge as the time step size is decreased. The lower right plot in Figure  \ref{fig:twoplates_energy}  shows that for $\Delta t = 0.01$ the energy curves for different schemes are nearly indistinguishable. A good agreement is also found between the curves for $\Delta t= 0.1$ and 
$\Delta t=0.01$ for ETD4RK in the upper right plot of Figure~\ref{fig:twoplates_energy}. 

\begin{figure}[htbp]
\centerline{
\includegraphics[width=120mm]{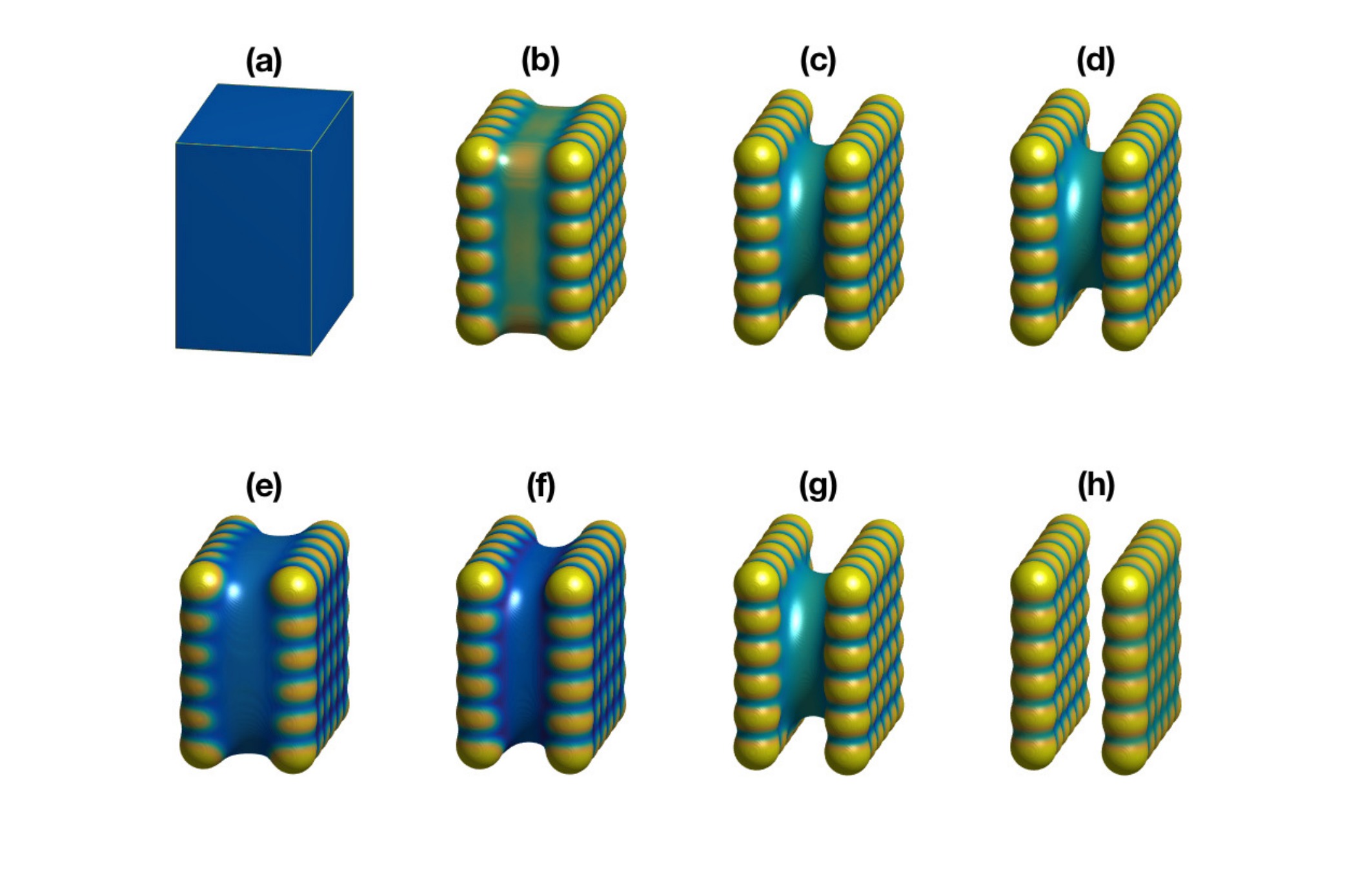}
}
\vspace{-0.4in}
\caption{\small Simulation of the gradient-flow dynamics (\ref{eqn:gradientflow}) for the two-plate system using the stabilized ETD4RK scheme with $\Delta t = 0.05$ and loose initials. The surfaces are defined as the 1/2-level set of a phase-field function $\phi$. The plate-plate separation is fixed to be $d = 12${\AA}. From (a) to (d): the snapshots at $t = 0, 50, 500$ and $1000$ during the gradient-flow dynamics with $(q_1,q_2)=(0.2e,0.2e).$  From (e) to (h), the equilibrium states of the solute-solvent interface for different partial charges $(q_1,q_2)=(0.1e,0.1e)$, $(-0.1e,0.1e)$, $(0.2e,0.2e)$, $(-0.2e,0.2e)$, respectively. }
\label{fig:twoplates01}
\end{figure}

\begin{figure}
\centerline{
\includegraphics[width=180mm]{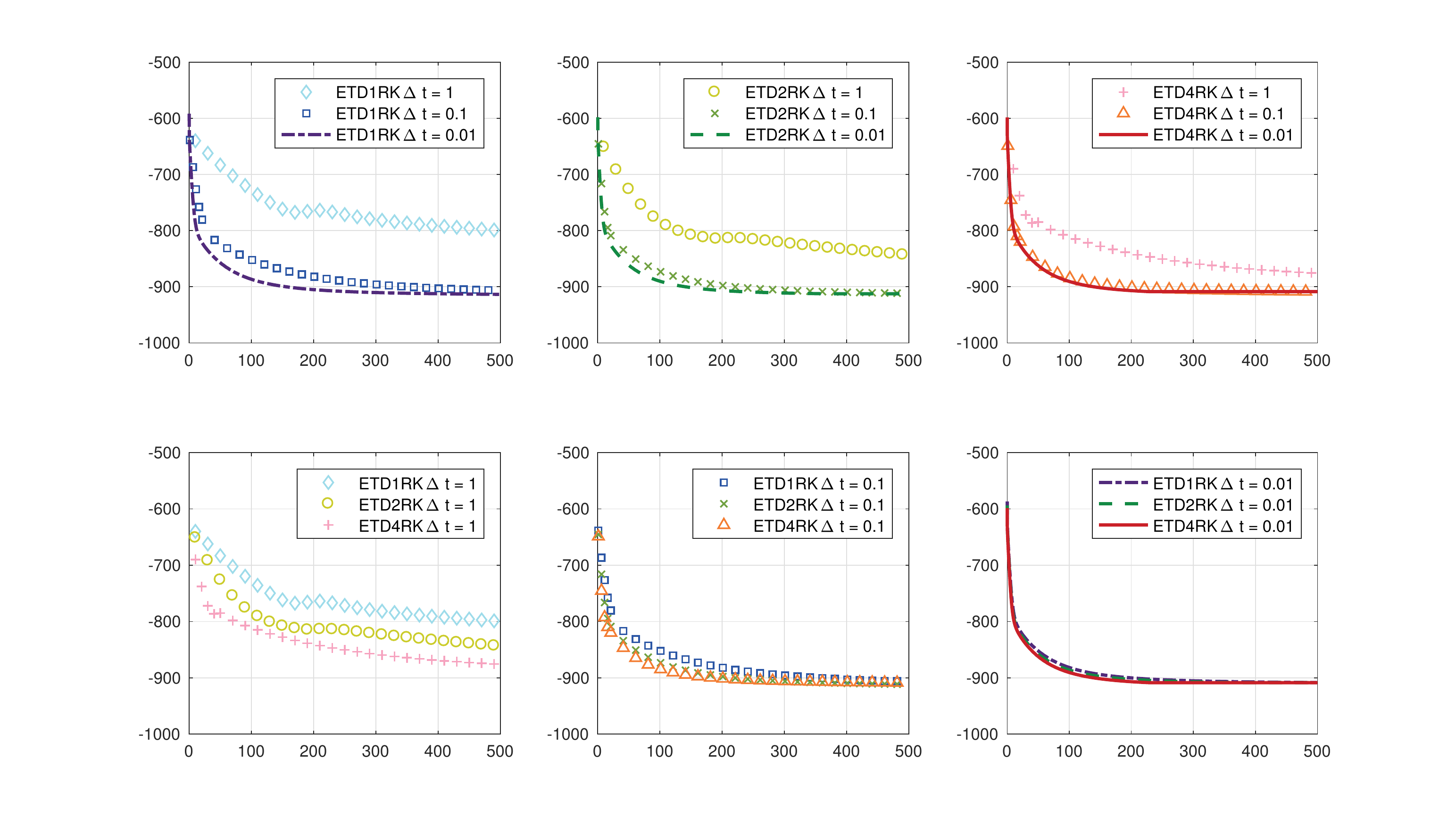}
}
\vspace{-6 mm}
\caption{\small Energy evolution for the gradient-flow dynamics (\ref{eqn:gradientflow}) for the two-plate system with loose initial of plate separation $d = 12${\AA}. }
\label{fig:twoplates_energy}
\end{figure}

We further test the convergence rates of the stabilized ETDRK schemes. To this end, we perform the simulations on a small time interval $[0,1]$. We take the solution generated by the ETD4RK scheme with $\Delta t = 10^{-4}$ as the benchmark solution and then compute the errors in energy for all schemes with larger step sizes. Table \ref{table:rate} presents the energies, errors and the convergence rates based on the data at $t=1$ for all schemes with time step sizes being halved from $\Delta t = 1\times 10^{-1}$ to $1.5625\times 10^{-3}$. These data are also used to generate Figure \ref{fig:convergencerate} which shows energy errors against time step sizes in a logarithmic plot for different ETD Runge-Kutta schemes. We can see from both the table and curves that the numerically computed convergence rates all tend to approach the theoretical values. Moreover, to obtain an energy error comparable to that of ETD1RK with $\Delta t = 1.5625\times 10^{-3}$, we can take a $2^3$-times larger step size for ETD2RK, or a $2^6$-times larger step size for ETD4RK. Since the computational cost of ETD4RK scheme is about 4 times of that for
ETD1RK per step, the ETD4RK scheme basically provides a factor of 16 speed-up at this particular accuracy level for this special test case.

\begin{table}
\scriptsize
\begin{center}
\begin{tabular}{lllllllllllllr}
\hline
$\Delta t$ &  \multicolumn{3}{l}{ETD1RK} & & \multicolumn{3}{l}{ETD2RK} & &  \multicolumn{3}{l}{ETD4RK} \\
\cline{2-4} \cline{6-8} \cline{10-12}
    & Energy & Error & Rate & & Energy & Error & Rate & & Energy & Error & Rate \\
\hline
$1.0000\times 10^{-1}$       & -640.023    & 14.594   & --     & & -646.0728 & 8.5448 & --      & & -653.93952183 & 3.1e-1 & -- \\
$5.0000\times 10^{-2}$       & -646.118    & 8.499   & 0.78 & & -651.7595 & 2.8580 & 1.58 & & -654.58950486 & 2.8e-2 & 3.48 \\
$2.5000\times 10^{-2}$      & -649.866    & 4.751   & 0.84 & & -653.6880 & 0.9295 & 1.62 & & -654.61527138 & 2.3e-3 & 3.58\\
$1.2500\times 10^{-2}$      & -652.094    & 2.522   & 0.91 & & -654.3495 & 0.2680 & 1.79 & & -654.61743360 & 1.8e-4 & 3.71 \\
$6.2500\times 10^{-3}$      & -653.316    & 1.301   & 0.95 & & -654.5453 & 0.0722 & 1.89 & & -654.61760092 & 1.3e-6 & 3.81 \\
$3.1250\times 10^{-3}$      & -653.956    & 0.661   & 0.98 & & -654.5987 & 0.0188 & 1.94 & & -654.61761288 & 9.1e-7 & 3.82 \\
$1.5625\times 10^{-3}$      & -654.284    & 0.333   & 0.99 & & -654.6127 & 0.0048 & 1.95 & & -654.61761373 & 6.0e-8 & 3.92\\
$10^{-4}$ (Benchmark)      & --    & --   & -- & & -- & -- & -- & & -654.61761379 & -- & -- \\
\hline
\end{tabular}
\caption{\small The energies, errors and the corresponding convergence rates at time $t=1$ by the stabilized ETD1RK, ETD2RK and ETD4RK schemes for the gradient-flow dynamics (\ref{eqn:gradientflow}) with $(q_1,q_2) = (0.2e, 0.2 e)$.}
\label{table:rate}
\end{center}
\end{table}

\begin{figure}[htbp]
\centerline{
\includegraphics[width=80mm]{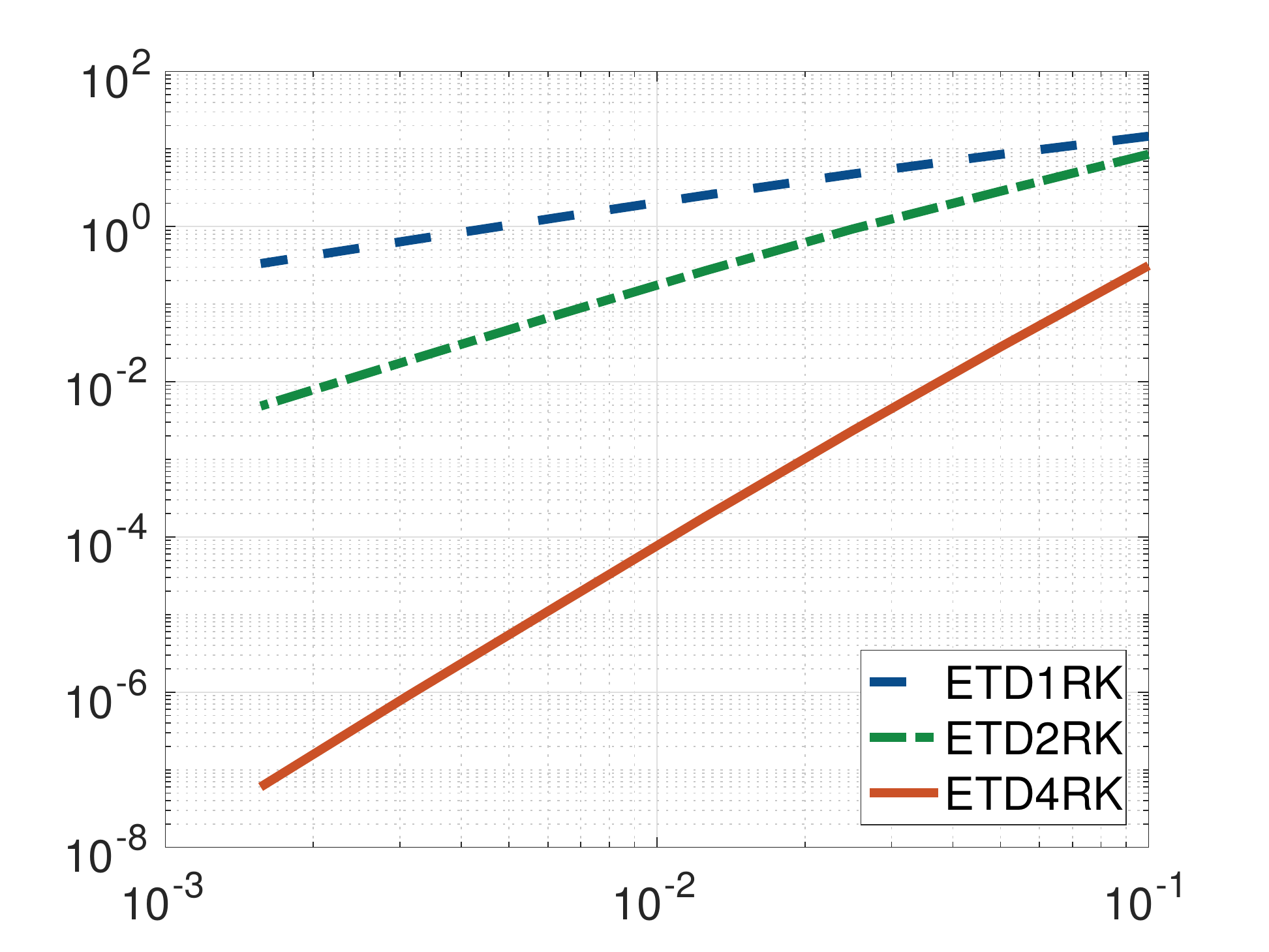}
}
\caption{\small Logarithmic plot of energy errors v.s. time step sizes of the stabilized ETD1RK, ETD2RK,  and ETD4RK schemes for the gradient-flow dynamics of two parallel plates with $(q_1,q_2)=(0.2e,0.2e)$.}
\label{fig:convergencerate}
\end{figure}

For a given reaction coordinate $d,$ there can be multiple stable equilibrium phase fields $\phi_d$ that are local minimizers of the phase-field VISM free-energy functional. In Appendix \ref{s:Appendix}, we briefly discuss the Potentials of Mean Force (PMF) which can effectively describe the solute-solute interaction. The PMF can have multiple branches along the reaction coordinate $d$, and hence can lead to hysteresis.  Strictly speaking, our PMFs are different from those defined using a Boltzmann average over all possible minimizers. Rather, our PMFs reflect possible branches of the VISM free energy along the reaction coordinate $d.$

In Figures~\ref{fig:twoplates_loose} and Figures~\ref{fig:twoplates_tight}, we plot the different components of the PMF with loose and tight initial surfaces, respectively. For the loose initials (Figure~\ref{fig:twoplates_loose}), the geometric part displays a strong attraction below a critical distance $d_c$ at which capillary evaporation begins. The crossover distance decreases from $d_c \simeq 21$~{\AA} for $(q_1,q_2) = $ ($-0.2\,$e, $+0.2\,$e) down to $9$~{\AA} for $(q_1,q_2) = $ ($0\,$e, $0\,$e). The value $21$ {\AA} is larger than $14$ {\AA} predicted by the sharp-interface VISM where the curvature correction was included. Note that the opposite charging has a much stronger effect than like-charging due to the electrostatic field distribution discussed above. Also the solute-solvent vdW part of the interaction is strongly affected by electrostatics due to the very different surface geometries induced by charging. Both curves $G_{\rm geo}^{\rm PMF}(d)$ and $G_{\rm vdW}^{\rm PMF}(d)$ demonstrate the strong sensitivity of nonpolar hydration to local electrostatics when capillary evaporation occurs and very ``soft" surfaces are present. For the surfaces resulting from the tight initials (Figure~\ref{fig:twoplates_tight}),  the situation is a bit less sensitive to electrostatics as the final surface is closer to the vdW surface for $d_c \gtrsim 6$~{\AA}.

\begin{figure}[hpbt]
\centerline{
\includegraphics[width=120mm]{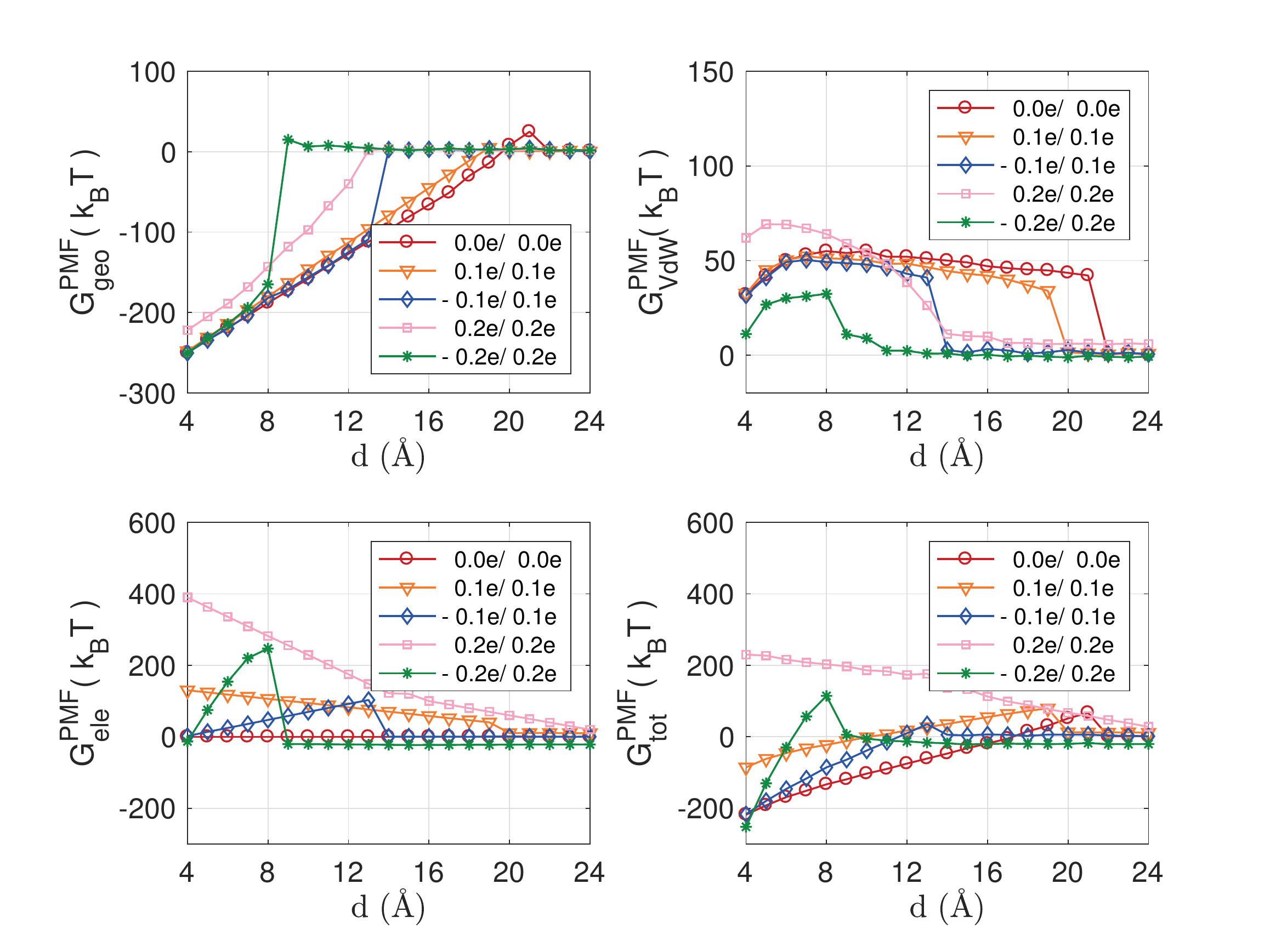}
}
\vspace{-6 mm}
\caption{\small Different components of the PMF for the two-plate system for different charge combinations $(q_1, q_2)$ (see legend) obtained by the phase-field VISM with loose initial surfaces.}
\label{fig:twoplates_loose}
\end{figure}

\begin{figure}[htbp]
\centerline{
\includegraphics[width=120mm]{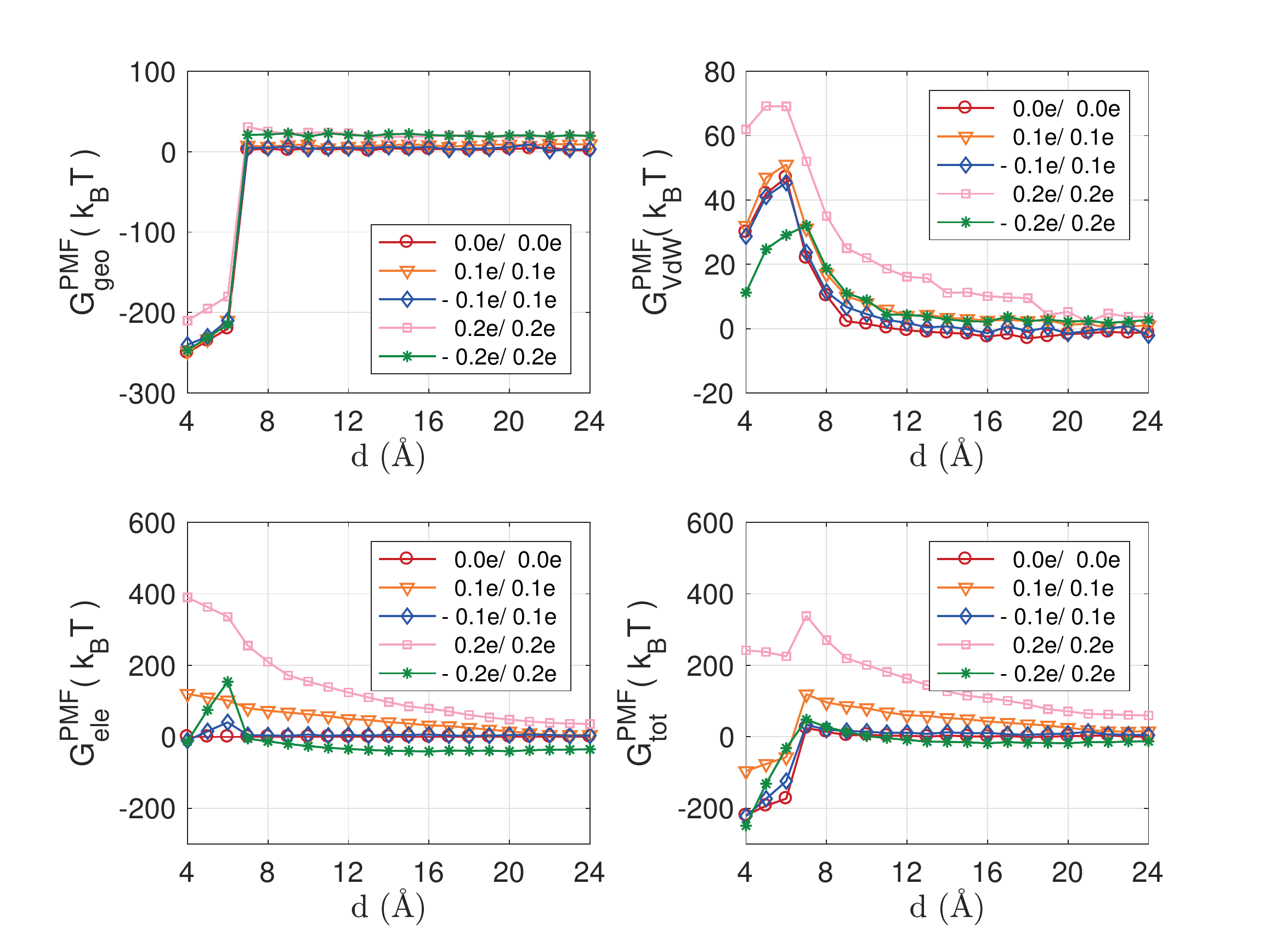}
}
\vspace{-3 mm}
\caption{\small Different components of the PMF for the two-plate system for different charge combinations $(q_1, q_2)$ (see legend) obtained by the phase-field VISM with tight initial surfaces.}
\label{fig:twoplates_tight}
\end{figure}


\section{Conclusions}

We have presented a new phase-field model to study the implicit solvation of charged molecules with Coulomb-field approximation. In this new model, we introduce the term $f(\phi) = (\phi^2-1)^2$ in \reff{General_phasefield} to localize the boundary force near the solute-solvent interface. In comparison with the old model used in our previous work, the new one keeps the force localized  only around
the interface. In addition, the new model displays a better hyperbolic tangent profile than the old one for a fixed interfacial width $\epsilon>0$. 

We have shown that our new phase-field model $\Gamma$-converges to the corresponding sharp interface model. To make our theory more general, we include the solute-solute mechanical interactions in the energy functional for our $\Gamma$-convergence
analysis. 

In developing the numerical method for the phase-field gradient-flow dynamics, we first adopt a linear splitting scheme to reformulate the underlying equation,  and then use an exponential time differencing method coupled with a Runge-Kutta scheme to solve the system which has been shown recently to be stable and efficient when dealing with a gradient-flow dynamics \cite{JuZhangDu_CMS15, WangJuDu_JCP16}. Using the two-plate system as a testing example, we have tested the efficiency and  convergence for the ETD1RK, ETD2RK, and ETD4RK schemes. Furthermore, we have used the ETD4RK scheme to study the effects of the separation of two plates and particle charges on the PMF. The simulations indicate that the two-plate system displays two different steady states obtained from loose and tight initials, respectively. The loose-initial steady state is energetically more favorable than the tight-initial steady state for a small distance of separation. When the distance of separation becomes larger and larger, the tight-initial steady state will becomes a more stable one. Our applications to single ions and two parallel charged plates have shown that our new theory and method can not only predict qualitatively well the solvation free energies for the system as in the previous studies \cite{DSM06a, Zhao_2013, SunWenZhao_JCP2015}, but more importantly improve the previous ones better in a few aspects such as maintaining desirable a 
hyperbolic tangent profile, keeping the force localized around the interface, and improving the computational efficiency by allowing a much smaller computational domain.

We are currently working to incorporate the Poisson--Boltzmann equation into our new phase-field VISM to better describe the electrostatic interaction. Another possible direction for our future study is to investigate the minimal energy path between the two solution branches of the two-plate system by coupling the phase-field VISM with the string method \cite{ERenVandenEijnden_PRB_2002, ERenVandenEijnden_JCP_2007, zhang2016recent} which will lead us the dynamics of two-plate system going from a loose-initial steady state to a tight-initial steady state.

\section*{Appendix}
\label{s:Appendix}
\renewcommand{\thesection}{A}
\setcounter{equation}{0}

To reduce the error in approximating the solute-solvent interaction energy caused by using a finite region $\Omega$, we replace the region of integral $\Omega$ in the last term in \reff{General_phasefield} by the entire space $\mathbb{R}^3$. Since the region outside $\Omega$ is filled with solvent where $\phi = 0$, this is equivalent to adding 
\begin{equation}
\label{EOmega}
\int_{\mathbb{R}^3\backslash\Omega} \rho U_{\text{vdW}}(\bx) + 
\int_{\mathbb{R}^3\backslash\Omega}  U_{\text{ele}}(\bx) \, d\bx.
\end{equation}

We now consider the potential of mean forces (PMF) for the two-plate system with the reaction 
coordinate being the plate-plate separation $d$ in {\AA}. Let us denote by 
$\phi_d$  a free-energy minimizing phase field corresponding to a given reaction coordinate $d.$  This phase-field function $\phi_d$ is a local
minimizer of the functional \reff{General_phasefield}, and $\phi_d = 0$ in $\R^3 \setminus \Omega.$ 
The total solvation free energy $F^{\epsilon}[\phi_d]$ is the sum of the geometrical part (the surface energy) $F^\epsilon_{\rm geo}[\phi_d]$,  the solute-solvent van der Waals interaction energy $F_{\rm vdW}[\phi_d]$,  and the electrostatic energy $F_{\rm ele}[\phi_d]:$
\[
F^{\epsilon}[\phi_d] = F^\epsilon_{\rm geo}[\phi_d] + F_{\rm vdW}[\phi_d] + F_{\rm ele}[\phi_d]. 
\]
These three terms are the same as those in \reff{General_phasefield}, except the integrals are over $\R^3.$ 
Since $\phi_d = 0$ outside $\Omega$, the first term $F^\epsilon_{\rm geo}[\phi_d]$ is exactly the same as
the first integral in \reff{General_phasefield} with $\phi_d $ replacing $\phi.$ 
As in \cite{WangEtal_VISMCFA_JCTC12,Che_VISM_JCTC13}, we define the (total) PMF by 
\begin{align*}
G^{{\rm PMF}, \epsilon}_{{\rm tot}}(d) 
= G_{\rm geo}^{{\rm PMF},\epsilon} (d)+G_{\rm vdW}^{\rm PMF} (d)+ G_{\rm ele}^{\rm PMF}(d),
\end{align*}
with 
\begin{align*}
& G_{\rm geo}^{{\rm PMF},\epsilon} (d) = F^\epsilon_{\rm geo}[\phi_d] - F^\epsilon_{\rm geo} [\phi_\infty],
\nonumber \\
& G_{\rm vdW}^{\rm PMF } (d) = F_{\rm vdW}[\phi_d] - F_{\rm vdW} [\phi_\infty] 
 +  \sum_{i\in \mbox{\tiny Plate I}}\, 
\sum_{j\in \mbox{\tiny Plate II}} U_{i,j}( |\bx_i-\bx_j| ), 
\\
& G_{\rm ele}^{\rm PMF}(d) = F_{\rm ele}[\phi_d]  - F_{\rm ele}[\phi_\infty] 
 + \frac{1}{4 \pi \ve_{\rm m} \ve_0}
\sum_{i \in \mbox{\tiny  Plate I} }\, \sum_{j \in \mbox{\tiny {Plate II}} } 
\frac{Q_iQ_j}{|\bx_i - \bx_j|}. 
\end{align*}
Here a quantity at $\infty$ is understood as the limit of that quantity at a coordinate $d'$ as $d' \to \infty,$ and $U_{i,j}$ is the Lennard-Jones interaction potential between $\bx_i$ and $\bx_j$. A quantity at $\infty$ can be calculated by doubling that of a single plate. 

For each $d$ and $\ve > 0$, we compute $\phi_d$ and $\phi_\infty$, the latter is obtained by minimizing \reff{General_phasefield} for a 
single plate. This is one of the two plates in terms of the solute atomic positions.
 Then, we can compute $G_{\rm geo}^{{\rm PMF}, \epsilon}(d) $ by evaluating integrals over $\Omega.$ 
The computation of $G_{\rm vdW}^{\rm PMF}(d) $ is similar, as both $F^\epsilon_{\rm geo}[\phi_d] $ and $ F^\epsilon_{\rm geo} [\phi_\infty]$
contain the first integral in \reff{EOmega}, so they cancel, and the calculation of double-sum term in $G_{\rm vdW}^{\rm PMF}(d)$ is 
rather straightforward.  

We now focus on the calculation of $G_{\rm ele}^{\rm PMF}(d)$. Again, the double-sum term can be evaluated directly.  
Denote $\tau_0=\frac{1}{32 \pi^2 \ve_0}\left( \frac{1}{ \ve_{\rm w}} - \frac{1}{\ve_{\rm m}}\right). $ 
We have for the first two terms in  $G_{\rm ele}^{\rm PMF}(d)$  that 
\begin{align*}
&F_{\rm ele}[\phi_d]  - F_{\rm ele}[\phi_\infty] \nonumber  \\
&\quad = \tau_0 \int_\Omega f(\phi_d)  \left| \left(\sum_{i\in\text{\tiny Plate I}} + \sum_{i\in\text{\tiny Plate II}} \right) \frac{Q_i (\bx-\bx_i) }{(\bx - \bx_i)^3} \right|^2d\bx  - 2 \tau_0 \int_\Omega f(\phi_\infty) \left|   \sum_{i\in\text{\tiny Plate I}}\frac{Q_i (\bx-\bx_i) }{(\bx - \bx_i)^3} \right|^2 d\bx
\\
& \quad\quad +\tau_0 \int_{\R^3 \setminus \Omega}   \left| \left(\sum_{i\in\text{\tiny Plate I}} + \sum_{i\in\text{\tiny Plate II}} \right) \frac{Q_i (\bx-\bx_i) }{(\bx - \bx_i)^3} \right|^2d\bx  - 2 \tau_0 \int_{\R^3 \setminus \Omega}
 \left|   \sum_{i\in\text{\tiny Plate I}}\frac{Q_i (\bx-\bx_i) }{(\bx - \bx_i)^3} \right|^2 d\bx. 
 \end{align*}
 The integrals over $\Omega$ can be evaluated by numerical quadrature. Note 
that $f(\phi_d)$ and $f(\phi_\infty)$ vanish in a neighborhood of solute particles $\bx_i$ so that these integrals are well-defined.
By the symmetry and the fact that the single plate that we used for calculating $\phi_\infty$ is one of the two plates, the sum of the integrals over $\R^3 \setminus \Omega$ are simplified to
 \begin{align}
 \label{sumsum}
2\tau_0\sum_{i\in\text{\tiny Plate I}} \sum_{j\in\text{\tiny Plate II}} Q_i Q_j \int_{\mathbb{R}^3\backslash\Omega}\frac{(\bx-\bx_i)\cdot (\bx - \bx_j) }{|\bx - \bx_i|^3 |\bx - \bx_j|^3}   d\bx. 
\end{align}
For each pair $i$ and $j$ in the double-sum, we have 
\begin{align*}
&\int_{\R^3 \setminus \Omega} \frac{(\bx-\bx_i) \cdot (\bx - \bx_j) }{|\bx - \bx_i|^3 
|\bx - \bx_j|^3}  d\bx
= \int_{\R^3 \setminus \Omega} \nabla \left( \frac{1}{|\bx-\bx_i|}\right) \cdot
 \nabla \left( \frac{1}{|\bx-\bx_j|} \right)  d\bx\\
&\quad = - \int_{\partial \Omega}  \frac{1}{|\bx-\bx_i|} \, 
\frac{\partial }{\partial \bn} \left(  \frac{1}{|\bx-\bx_j|} \right)  dS_\bx
=  \int_{\partial \Omega}  
\frac{\bn(\bx) \cdot (\bx - \bx_j)} {|\bx - \bx_i| \, |\bx-\bx_j|^3} \, dS_\bx, 
\end{align*}
where $\partial/\partial \bn$ denotes the normal derivative along the boundary
$\partial \Omega$  and $\bn(\bx)$ is the unit normal to 
$\partial \Omega$ at $\bx$ pointing from inside to outside of $\Omega.$ 
By the symmetry again, we have 
\begin{align*}
2 \int_{\R^3 \setminus \Omega} \frac{(\bx-\bx_i) \cdot (\bx - \bx_j) }{|\bx - \bx_i|^3 
|\bx - \bx_j|^3}  d\bx 
= \int_{\partial \Omega} \frac{\bn(\bx)}{|\bx-\bx_i| \, |\bx - \bx_j|}  \cdot 
\left( \frac{\bx-\bx_i}{ |\bx-\bx_i|^2} + \frac{\bx-\bx_j}{ |\bx-\bx_j|^2} \right) dS_\bx. 
\end{align*}
Hence,  \reff{sumsum} is further simplified to
\begin{align*}
\tau_0 \sum_{i\in\text{\tiny Plate I}} \sum_{j\in\text{\tiny Plate II}} Q_i Q_j \int_{\partial \Omega} \frac{\bn(\bx)}{|\bx-\bx_i| \, |\bx - \bx_j|}  \cdot \left( \frac{\bx-\bx_i}{ |\bx-\bx_i|^2} + \frac{\bx-\bx_j}{ |\bx-\bx_j|^2} \right) dS_\bx,
\end{align*}
and can therefore be calculated by evaluating the surface integrals.

\vspace{4 mm}

\noindent{\bf Acknowledgments.}
Y.Z. was supported by a grant from the Simons Foundation through Grant No.\ 357963, and University Facilitating Fund from George Washington University. H.S. was supported in part by an AMS-Simons Foundation Travel Grant and Simons Foundation Collaborative Grant with grant number 522790. B.L. was supported in part by the NSF through the grant DMS-1620487. 
Q.D. was supported in part by NSF DMS-1719699.

\end{document}